\theoremstyle{thmstyleone}%
\newtheorem{theorem}{Theorem}
\newtheorem{lemma}[theorem]{Lemma}%
\theoremstyle{thmstyletwo}%
\newtheorem{example}{Example}%
\theoremstyle{thmstylethree}%
\begin{document}

\title{A novel  numerical method for mean field stochastic  differential equation}

\author[1]{Jinhui Zhou}\email{jhzhou21@mails.jlu.edu.cn}

\author[1]{Yongkui Zou}\email{zouyk@jlu.edu.cn}

\author*[1]{Shimin Chai}\email{chaism@jlu.edu.cn}

\author[1]{Boyu Wang}\email{wangby23@mails.jlu.edu.cn}

\author[1]{Ziyi Tan}\email{tanzy1021@mails.jlu.edu.cn}

\affil[1]{\orgdiv{School of Mathematics, Jilin University}, \orgaddress{ \city{Changchun}, \postcode{130012},  \country{China}}}

\abstract{In this paper, we propose a novel method to approximate the mean field stochastic  differential equation by means of  approximating the density function via Fokker-Planck equation.  We construct a well-posed truncated Fokker-Planck equation whose solution is an approximation to  the density function of solution to the mean field stochastic  differential equation. We also apply finite difference method to approximate the truncated Fokker-Planck equation and derive error estimates. We use the numerical density function  to replace the true measure in mean field stochastic  differential equation and set up a stochastic  differential equation to approximate the mean field one. Meanwhile, we derive  the corresponding error estimates. Finally, we present several numerical experiments to illustrate the theoretical analysis.}

\keywords{Mean field stochastic  differential equation, Fokker-Planck equation, finite difference method, error estimate.}

\pacs[AMS Classification]{65C30, 82C31, 60H35, 65N06}

\maketitle

\section{Introduction}
Stochastic differential equation (SDE for short) is a fundamental tool for modeling the evolution of various phenomena in different fields involving noise, cf.\ finance \cite{BlaSch1973}, chemical reaction \cite{San2014} and  environment science \cite{Dag1982}.
The mean field SDE, whose  coefficient functions depend on the law of the solution, plays an important role  in many fields:  the Hodgkin-Huxley model for neuron activation in neuroscience \cite{Bal2012},  the Patlak-Keller-Segel equations in biology and chemistry \cite{Cli1953,Eve1970} and  the Atlas models for equity markets in financial mathematics \cite{BanFerKar2005,BenJul2014}.

In recent years,  numerical approximations to the mean field SDE  have attracted many attentions.
Its main idea  is to apply an interactive particle system to approximate
the mean field SDE and then use an empirical measure to approach the joint distribution of particles. Therefore,  the interactive particle system is usually transformed to a high dimensional SDE and various effective  numerical methods can be applied to set up numerical schemes \cite{BosTal1997,dosEng2022,Chedos2022,ReiSto2022}.
Belomestny and Schoenmakers \cite{BelSch2018} proposed a projection-based particle method.
Liu \cite{Liu2024} proposed  three numerical schemes  and studied  the convergence rate for
the first one. Error estimates to the particle method  showed that enlarging  particle number improves the approximate accuracy. However, the simulation for empirical measure with a large number of particles is  extremely costly. To overcome this difficulty,  a random batch method was recently proposed and analyzed in \cite{JinLiLiu2020,JiLiLiu2021,JinLi2022,JinLi[2022]}, which was designed to parallel simulate several small particle systems at each iteration step. This method largely reduced the computational complexity  but raise the approximate error  if the  batch number is small.

In this paper, we will propose a novel method for a class of mean field SDEs based upon Fokker-Planck equation. In fact, the density function of the solution to a mean field SDE satisfies a nonlinear Fokker-Planck equation. Then we apply a finite difference method to the Fokker-Planck equation to obtain an approximate density function. Afterwards,  we use the approximate density function to replace the true density function  and transform  the mean field SDE to an SDE, which can be easily solved by many numerical methods. Comparing with the particle method, our's  avoids a great amount simulations of high dimensional sample trajectories for determining the empirical measure. Instead, we numerically solve a deterministic  Fokker-Planck equation, which results in  a high precision approximate density function. In order to observe dynamical behavior of  sample trajectory of a particle, one need to solve a high dimensional SDE if using  interactive particle system to approximate the mean field SDE. However, we only need to solve an SDE after getting an approximate density function.

%

Now, we  describe our method in more details.
We first truncate the whole $\mathbb{R}^d$ space on to a bounded domain and construct  a truncated Fokker-Planck equation equipped with a homogeneous Dirichlet boundary condition.
Then, we apply an explicit-implicit difference scheme  to approach the truncated equation to obtain a numerical density function. We replace the true density function in the mean field SDE with the approximate one, and hence  obtain an SDE, with which we can easily simulate the approximate trajectories of the mean field SDE. Meanwhile,  we also investigate corresponding  error estimates.

The organization of this paper is as follows. In Section \ref{s3.2}, we introduce a class of mean field SDEs and assumptions. In Section \ref{s3.3}, we construct a truncated Fokker-Planck equation and apply an effective numerical method to approximate the density function. Based on the solution to the truncated Fokker-Planck equation, we construct an SDE  to approximate the mean field SDE and investigate the corresponding error estimate. In Section \ref{s3.4},  we construct an auxiliary SDE in terms of the numerical density function to approximate the mean field SDE. Then, we apply  Euler-Maruyama method to solve this SDE and provide  error estimates. In  Section \ref{s3.5}, We present some numerical experiments  to illustrate  the effectiveness of our method.

\section{Preliminary}\label{s3.2}
In this section, we introduce a class of  mean field SDEs  and some assumptions.

Let $d>0$ be an integer and $(\mathbb{R}^d,| \cdot|)$  be a $d$-dimensional Euclidean space.
By  $|\cdot|_{F}$ we  denote the Frobenius norm of  a matrix. Let $C^2_0 (\mathbb{R}^d)$ be a space of  twice continuously differentiable  functions with compact support. For any integer $k\geq0$, denote by $H^k(\mathbb{R}^d)$ the standard Sobolev space, where $H^0(\mathbb{R}^d)=L^2(\mathbb{R}^d)$.

Let $T>0$ be a real number and  $(\Omega,\mathcal{F},\mathbb{P})$ be a probability space.  Denote by  $W(t)$ $(t\in[0,T])$  an $m$-dimensional standard Brownian motion.
By $\mathcal{P}(\mathbb{R}^d)$ we denote the set of all probability measures on $\mathbb{R}^d$ and define a subset
\begin{equation*}
 \mathcal{P}_2(\mathbb{R}^d)=\left\{ \mu\in\mathcal{P}(\mathbb{R}^d):\int_{\mathbb{R}^d}|x|^2\mu(dx) <\infty\right\}.
\end{equation*}
Let $L^2(\Omega)$ be a space of $\mathbb{R}^d$-valued, $\mathcal{F}$ measurable
random variables $X$ satisfying $E|X|^2 < \infty$. By $\mathcal{L}_{X}$ we denote the law of $X\in L^2(\Omega)$.

Consider  a class of mean field SDEs
\begin{equation}\label{e3.2.1}
  dX(t)=F(t,X(t),\mathcal{L}_{X(t)})dt+\sigma(t,X(t))dW(t),\quad X(0)=X_0\in L^2(\Omega).
\end{equation}
Define a filtration $\mathcal{F}_t=\sigma(X_0,W(s),0\leq s \leq t)$.
By abbreviation, let  $\mu_t=\mathcal{L}_{X(t)}$ $(0\leq t\leq T)$ denote a family of laws  of  solution $X(t)$ to \eqref{e3.2.1}.

We assume

\begin{itemize}
  \item [(A1)] the function  $K=(K_1,\cdots,K_d)^T:[0, T] \times \mathbb{R}^d\times \mathbb{R}^d\to\mathbb{R}^{d}$ together with its derivatives up to second order are bounded and continuous. Furthermore, there exists a constant $L>0$ such that $\int_{\mathbb{R}^d}|K(t,x,y)|^2dy\leq L$ for all $(t,x)$.
  \item [(A2)] $F =(F_1,\cdots,F_d)^T: [0, T] \times \mathbb{R}^d \times \mathcal{P}(\mathbb{R}^d) \to \mathbb{R}^d$ is defined as
  \begin{equation*}
    F(t,x,\mu_t)=f(t,x)+\int_{\mathbb{R}^d}K(t,x,y)\mu_t(dy),
  \end{equation*}
  where $f=(f_1,\cdots,f_d)^T:[0, T] \times \mathbb{R}^d \to \mathbb{R}^d$ is twice  continuously differentiable with bounded first and seconde derivatives.
  \item [(A3)]$\sigma: [0, T] \times \mathbb{R}^d\to\mathbb{R}^{d\times m}$ is  twice continuously differentiable with  bounded first and second derivatives.
\end{itemize}

From  (A2) and (A3), it follows that there exist a constant $L>0$ such that
\begin{equation}\label{e3.2.5}
    \begin{aligned}
    |f(t,x)-f(t,\bar{x})|+ |\sigma(t,{x})-\sigma(t,\bar{x})|_F&\leq L|x-\bar{x}|,\quad \forall t\in[0,T],\ x, \bar{x}\in\mathbb{R}^d.
    \end{aligned}
  \end{equation}
Under above three assumptions and by \cite{Eva2013,KumNeeReiSto2022}, \eqref{e3.2.1} has a unique solution $X(t)$  satisfying
\begin{equation}\label{e3.2.7}
  E|X(t)|^2\leq L(1+E|X_0|^2),\quad \forall t\in[0,T].
\end{equation}

Generally speaking, the existing numerical methods for approximating the mean field SDE are based on converting \eqref{e3.2.1} to an interacting particle system, where the true measure is replaced by an empirical measure, which is a linear combination of point measures and is independently simulated by Monte Carlo method. Then, the interacting particle system is governed by a high dimensional SDE  and many numerical methods can be applied. However, in order to get a high-accuracy empirical measure, the numbers of particles and sample trajectories should be extremely enlarged, which involves high computation complexity.

In this paper, we will propose a completely different method from the particle method to approach \eqref{e3.2.1}. In fact, the density function of  the solution to \eqref{e3.2.1} satisfies a deterministic nonlinear Fokker-Planck equation. Our strategy is to apply finite difference method to solve the nonlinear Fokker-Planck equation so as to obtain an approximate density function, which helps us to transform \eqref{e3.2.1} to an SDE. By virtue of  the classical theory on numerical partial differential equation (PDE for short), we obtain an  approximate density function with desired precision, and then  set up a high-accuracy approximation to  \eqref{e3.2.1}. Compared to Monte Carlo simulation, our method reduces computation complexity while approximating the law of solution and accelerates the procedure of solving \eqref{e3.2.1}.

\section{A nonlinear Fokker-Planck equation and the approximation}\label{s3.3}
In this section, we first derive a nonlinear Fokker-Planck equation which characterizes the development of   the density function of solution to   mean field SDE \eqref{e3.2.1}. Then, we truncate the nonlinear Fokker-Planck onto a bounded domain and assign a homogeneous Dirichlet boundary value condition  to construct an approximate equation.  Afterward, we apply an explicit-implicit finite  difference method  to  set up a fully discretized scheme for the truncated equation. Based on the approximate density function we construct an  SDE to approximate the mean field SDE \eqref{e3.2.4}. We will derive error estimates between the solutions to the approximate SDE and the original mean field SDE.

\subsection{A nonlinear Fokker-Planck equation}
Denote by  $p(t,x)$ the density function of the solution $X(t)$ to mean field SDE \eqref{e3.2.4}. Without loss of generality, we assume $p(t,\cdot)\in H^2(\mathbb{R}^d)$ and $p\geq 0$. Thus, we have $\mu_t(dx) =p(t, x)dx$. Let $p_0(x)$ be the  density function of $X_0$.  From \eqref{e3.2.7}, it follows that
\begin{equation}\label{e3.3.1}
  \int_{\mathbb{R}^d}|x|^2p(t,x)dx\leq C(1+\int_{\mathbb{R}^d}|x|^2p_0(x)dx)<+\infty,
\end{equation}
which implies $p(t,x)=o(\frac{1}{|x|^3})$ as $x\to \infty$.

Now, we are already to derive a nonlinear Fokker-Planck equation.
For any $\phi \in C^2_0 (\mathbb{R}^d)$, let $Y(t) = \phi(X(t))$, then
\begin{equation}\label{e3.2.10}
  E(Y(t) ) = \int_{\mathbb{R}^d}p(t,x)\phi(x) dx.
\end{equation}
Thus, \eqref{e3.2.1} equivalently becomes
\begin{equation}\label{e3.2.4}
  dX(t)=f(t,X(t))dt+\int_{\mathbb{R}^d}K(t,X(t),y)p(t,y)dydt+\sigma(t,X(t))dW(t).
\end{equation}
By It\'o's formula, we have
\begin{equation}\label{e3.2.6}
  \phi(X(t)) = \phi(X(0))+\int_{0}^tL_{\mu_s}\phi (s,X(s))ds+\int_0^t (\bigtriangledown \phi(X(s)))^T\sigma(s,X(s)) dW(s),
\end{equation}
where
\begin{equation*}
  L_{\mu_t}\phi(t,x)=\sum_{i=1}^d\Big(f_i(t,x)+\int_{\mathbb{R}^d}K_i(t,x,y)p(t,y)dy\Big)\frac{\partial \phi(x)}{\partial x_i}+\frac12\sum_{i,j=1}^d a_{ij}(t,x)\frac{\partial^2 \phi(x)}{\partial x_i\partial x_j}.
\end{equation*}
Here $A=(a_{ij}):= \sigma\sigma^T$ is a symmetric matrix-valued function.
Take expectations on both sides of \eqref{e3.2.6} and apply the formula of integration by parts, then we get
\begin{equation*}\label{e3.2.8}
\begin{aligned}
&\int_{\mathbb{R}^d}p(t,x)\phi(x)dx=\int_{\mathbb{R}^d}p_0(x)\phi(x)dx\\
&\quad -\int_{\mathbb{R}^d}\!\sum_{i=1}^d\!\int_{0}^t\frac{\partial [(f_i(s,x)\!+\!\int_{\mathbb{R}^d}K_i(s,x,y)p(s,y)dy)p(s,x)]}{\partial x_i}ds\phi(x)dx\\
&\quad +\frac12\int_{\mathbb{R}^d}\sum_{i,j=1}^d\int_{0}^t\frac{\partial^2 [a_{ij}(s,x)p(s,x)]}{\partial x_i\partial x_j}ds\phi(x)dx.
\end{aligned}
\end{equation*}
Since $C_0^2(\mathbb{R}^d)$ is dense in $L^2(\mathbb{R}^d)$, we obtain
\begin{equation}\label{e3.2.3}
\begin{aligned}
p(t,x)
=&p_0(x)-\sum_{i=1}^d\int_{0}^t
\frac{\partial [(f_i(s,x)+\int_{\mathbb{R}^d}K_i(s,x,y)p(s,y)dy)p(s,x)]}{\partial x_i}ds\\
&+\frac12\sum_{i,j=1}^d\int_{0}^t\frac{\partial^2 [a_{ij}(s,x)p(s,x)]}{\partial x_i\partial x_j}ds.
\end{aligned}
\end{equation}
Notice that the functions $f_i$, $K_i$ and $a_{ij}$ are continuous with respect to $t$,  the solution $p$ of above equation  is  continuously differentiable with respect to $t$. Thus,  we obtain a nonlinear  Fokker-Planck equation \cite{Fra2005,BarRoc2020}
\begin{equation}\label{e3.2.2}
\frac{\partial p}{\partial t}=-\sum_{i=1}^d\frac{\partial[(f_i+\int_{\mathbb{R}^d}K_i(\cdot,\cdot,y)p(\cdot,y)dy)p]}{\partial x_i}+\frac12\sum_{i,j=1}^d\frac{\partial^2[a_{ij}p]}{\partial x_i\partial x_j},
\end{equation}
with initial value condition $p(0, x) = p_0(x)$.

In order to ensure the well-posedness of \eqref{e3.2.2}, we also assume
\begin{itemize}
\item [(A4)] There exist two constants $0 < \gamma_1 \leq \gamma_2$ such that
\begin{equation*}
  \gamma_1|y|^2\leq y^T A(t, x)y \leq  \gamma_2|y|^2,\quad \forall t \in[0,T], x, y \in\mathbb{ R}^d.
\end{equation*}
\end{itemize}


Under (A1)-(A4), the well-posedness and regularity of solution to autonomous nonlinear Fokker-Planck equation have been established \cite{BarRoc2020}, which may not be directly applied to the non-autonomous case.  However, we focus on  constructing a new numerical method to approximate the mean field SED \eqref{e3.2.4} with the help of a solution to \eqref{e3.2.2}. In fact, the  density function $p(t,x)$ is  a solution to \eqref{e3.2.2} and hence  we only assume that $p$ is the unique solution  without proof.

\subsection{A numerical approximation to nonlinear Fokker-Planck equation}\label{s3.3.2}
In order to approximate the nonlinear Fokker-Planck equation \eqref{e3.2.2}, we first truncate $\mathbb{R}^d$ to  a bounded domain $\mathcal D=(-\alpha,\alpha)^d$ for some $\alpha>0$.
Then we construct a truncated equation on $[0,T]\times\mathcal D$ to approximate \eqref{e3.2.2}
\begin{equation}\label{e3.3.3}
\begin{aligned}
  &\frac{\partial p_{\mathcal D}}{\partial t}= -\sum_{i=1}^d\frac{\partial [(f_i+\int_{\mathcal D}K_i(\cdot,\cdot,y)p_{\mathcal D}(\cdot,y)dy)p_{\mathcal D}]}{\partial x_i}+\frac12\sum_{i,j=1}^d\frac{\partial^2[a_{ij}p_{\mathcal D}]}{\partial x_i\partial x_j}=\\
  &  -\sum_{i=1}^d\frac{\partial [(f_i\!+\!\int_{\mathcal D}K_i(\cdot,\cdot,y)p_{\mathcal D}(\cdot,y)dy)p_{\mathcal D}]}{\partial x_i}\!+\!\frac12\sum_{i=1}^d\frac{\partial^2[a_{ii}p_{\mathcal D}]}{\partial x_i ^2}\!+\!\frac12\sum_{i\not=j=1}^d\frac{\partial^2[a_{ij}p_{\mathcal D}]}{\partial x_i\partial x_j},
  \end{aligned}
\end{equation}
with initial-boundary value condition
\begin{equation}\label{e3.3.4}
  p_{\mathcal D}(0,x)=p_{0}|_{{\mathcal D}}(x) \text{ for } x\in {\mathcal D},\quad p_{\mathcal D}(t,x)=0 \text{ for } t\in(0,T),\ x\in \partial {\mathcal D}.
\end{equation}

The unique existence and regularity estimates of the solution to \eqref{e3.3.3}-\eqref{e3.3.4} are interesting problems, but it is not the main task of this paper. Hence, we assume that the initial-boundary value problem \eqref{e3.3.3}-\eqref{e3.3.4} has  a unique positive  solution $p_{\mathcal D}(t,\cdot)\in H^2({\mathcal D})\cap H_0^1(\mathcal D)$. Then, $p_{\mathcal D}$  is an approximation to  $p|_{\mathcal D}$ and $p_{\mathcal D}\to p$ as $\alpha\to \infty$ in some a suitable space, which will be described in detail later.

Now, we study a finite difference approximation  to \eqref{e3.3.3}.
For any integer $N>0$, let  $\kappa=\frac{T }{N}$ be a time step size  and $t_n=n\kappa$ $(n=0,1,\cdots,N)$   be partition nodes.
For any integer $M>0$, define two index sets
\begin{equation*}
	\begin{aligned}
		&\mathbb{Z}_M=\{k\in Z^d: |k_i|\leq M,i=1,\cdots,d\},\quad
		\Theta_M=\mathbb{Z}_M\setminus \mathbb{Z}_{M-1}.
	\end{aligned}
\end{equation*}
Let  $h=\frac{\alpha}{M}$ be a spatial meshsize and   $x^k=hk$, $k\in \mathbb{Z}_M$, be partition nodes  inside domain $\mathcal D$. Define  ${D}_k=[x_{k_1},x_{k_1+1}]\times\cdots\times[x_{k_d},x_{k_d+1}]$, then $\{D_k: k\in\mathbb{Z}_M, k_i\leq M-1, i=1,\cdots, d\}$ is a uniform partition of ${\mathcal D}$.

Define a space of finite sequences
\begin{equation*}
  S_{\mathbb{Z}_{M}}^0=\left\{v_{\mathbb{Z}_{M}}=(v^k)_{k\in\mathbb{Z}_{M}}\in\mathbb{R}^{\mathbb{Z}_{M}}:
v^k=0~\text{for}~k\in\Theta_M\right\}
\end{equation*}
equipped with a discrete $L^2$-norm
$
\|v_{\mathbb{Z}_{M}}\|^2_{S^0_{\mathbb{Z}_{M}}}=\displaystyle\sum_{k\in \mathbb{Z}_{M}}|v^k|^2h^d.
$

For any $1\leq i\leq d$, define a unit vector $\eta^i\in\mathbb{R}^d$ whose i-th component is equal to $1 $. For any  $n=0,1,\cdots, N$ and $k\in \mathbb{Z}_{M}$, let $f_{i}^{n,k}=f_i(t_n,x^k)$,  $a_{ij}^{n,k}=a_{ij}(t_n,x^k)$ and $p^{n,k}_{{\mathcal D}}$ be an approximation to $p_{\mathcal D}(t_n,x^k)$.  Denote a numerical integration by
${\mathcal S}_{i}^{n,k}=\displaystyle\sum_{s\in\mathbb{Z}_M} K_i(t_n,x^k,x^{s})p^{n,s}_{{\mathcal D}}h^d$. Now, we construct an explicit-implicit difference scheme  to approximate  \eqref{e3.3.3}
\begin{equation}\label{e3.2.15}
	\begin{aligned}\displaystyle
		&\frac{p^{n+1,k}_{{\mathcal D}}-p^{n,k}_{{\mathcal D}}}{\kappa}\\ =&-\sum_{i=1}^{d}\frac{(f_{i}^{n,k+\eta^{i}}+{\mathcal S}_{i}^{n,k+\eta^{i}})p^{n,k+\eta^{i}}_{{\mathcal D}}
-(f_{i}^{n,k-\eta^{i}}+{\mathcal S}_{i}^{n,k-\eta^{i}})p^{n,k-\eta^{i}}_{{\mathcal D}}}{2h}\\
	&+\sum_{i=1}^{d}\frac{a_{ii}^{n+1,k+\eta^{i}}p^{n+1,k+\eta^{i}}_{{\mathcal D}}-2a_{ii}^{n+1,k}p^{n+1,k}_{{\mathcal D}}
			+a_{ii}^{n+1,k-\eta^{i}}p^{n+1,k-\eta^{i}}_{{\mathcal D}}}{2h^2}\\
		&+\sum_{i\neq j=1}^{d}\Big(\frac{a_{ij}^{n+1,k+\eta^{i}+\eta^{j}}p^{n+1,k+\eta^{i}+\eta^{j}}_{{\mathcal D}}-a_{ij}^{n+1,k+\eta^{i}-\eta^{j}}p^{n+1,k+\eta^{i}-\eta^{j}}_{{\mathcal D}}}{8h^2}\\
&\quad\quad+\frac{-a_{ij}^{n+1,k-\eta^{i}+\eta^{j}}p^{n+1,k-\eta^{i}+\eta^{j}}_{{\mathcal D}}
			+a_{ij}^{n+1,k-\eta^{i}-\eta^{j}}p^{n+1,k-\eta^{i}-\eta^{j}}_{{\mathcal D}}}{8h^2}\Big).
	\end{aligned}
\end{equation}
As  $\kappa$ and $h$ are sufficiently small, we  obtain a unique $p_{{\mathcal D},\mathbb{Z}_{M}}^{n+1}=(p^{n+1,k}_{{\mathcal D}})_{k\in\mathbb{Z}_{M}}\in S^0_{\mathbb{Z}_{M}}$ by solving linear equation \eqref{e3.2.15} for any given $p_{{\mathcal D},\mathbb{Z}_{M}}^n=(p^{n,k}_{{\mathcal D}})_{k\in\mathbb{Z}_{M}}\in S^0_{\mathbb{Z}_{M}}$.
Hence for a given initial value $(p_{{\mathcal D}}(0,x^k))_{k\in\mathbb{Z}_{M}}\in S^0_{\mathbb{Z}_{M}}$, we  recursively solve \eqref{e3.2.15} to get  a  numerical solution $p_{{\mathcal D},\mathbb{Z}_{M}}^n$  $(n=1,2,\cdots, N)$ to \eqref{e3.3.3}-\eqref{e3.3.4}. The classical error analysis theorem  \cite{LeV2007} provides convergence property, which is summarized in next lemma and its proof is omitted.

\begin{lemma}\label{le3.3.1}
Assume (A1)-(A4) hold.    Let $ p_{\mathcal D}$  and  $ p_{{\mathcal D},\mathbb{Z}_{M}}^n$ $(n=1,2,\cdots, N)$ be the real  and numerical solutions to  \eqref{e3.3.3}-\eqref{e3.3.4},  respectively. Then, there exits a constant $C > 0$ such that
\begin{equation*}
	\|(p_{\mathcal D}(t_n,x^k))_{k\in\mathbb{Z}_{M}}- p_{{\mathcal D},\mathbb{Z}_{M}}^n\|_{S^0_{\mathbb{Z}_{M}}}\leq C(\kappa+h^2).
\end{equation*}
\end{lemma}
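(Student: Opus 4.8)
The plan is to follow the classical consistency--stability--convergence paradigm (the discrete Lax equivalence principle) referenced in \cite{LeV2007}, with the one genuinely new ingredient being the nonlinear convolution term $\mathcal S_i$. Set the nodal error $e^{n,k}=p_{\mathcal D}(t_n,x^k)-p^{n,k}_{\mathcal D}$ and $e^n=(e^{n,k})_{k\in\mathbb{Z}_{M}}$. I first note that $e^n\in S^0_{\mathbb{Z}_{M}}$: the numerical solution vanishes on $\Theta_M$ by construction, and the exact solution $p_{\mathcal D}(t_n,\cdot)$ vanishes on $\partial\mathcal D$ by the homogeneous Dirichlet condition \eqref{e3.3.4}, so $e^0=0$ and every $e^n$ is admissible for the discrete $L^2$-norm $\|\cdot\|_{S^0_{\mathbb{Z}_{M}}}$.

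First I would establish consistency. Substituting the exact nodal values $p_{\mathcal D}(t_n,x^k)$ into \eqref{e3.2.15} defines the local truncation error $\tau^{n,k}$. A Taylor expansion in time of the forward difference gives the $O(\kappa)$ term (the convection is treated explicitly at $t_n$ and the diffusion implicitly at $t_{n+1}$, both first order), while the central differences for the first and mixed/second spatial derivatives give $O(h^2)$; here the assumed regularity $p_{\mathcal D}(t,\cdot)\in H^2(\mathcal D)$ together with the boundedness of the derivatives of $f$, of $a_{ij}$ (from (A3) via $A=\sigma\sigma^T$) and of $K$ (A1) supplies the smoothness needed for the expansions. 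The only non-routine term is the quadrature error $\int_{\mathcal D}K_i(t_n,x^k,y)p_{\mathcal D}(t_n,y)\,dy-\mathcal S_i^{n,k}$; since $p_{\mathcal D}(t_n,\cdot)$ vanishes on $\partial\mathcal D$, the node-based rule $\sum_s(\cdot)h^d$ coincides with the composite trapezoidal rule and is therefore $O(h^2)$. Collecting terms yields $\max_{n,k}|\tau^{n,k}|\le C(\kappa+h^2)$ and hence $\|\tau^n\|_{S^0_{\mathbb{Z}_{M}}}\le C(\kappa+h^2)$.

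Next I would prove stability by a discrete energy estimate. Subtracting the scheme for the numerical solution from that for the exact one, the error satisfies $\kappa^{-1}(e^{n+1}-e^n)=\mathcal C_h e^n+\mathcal A_h e^{n+1}+\mathcal N^n+\tau^n$, where $\mathcal A_h$ is the implicit variable-coefficient diffusion operator, $\mathcal C_h$ the explicit convection, and $\mathcal N^n$ collects the nonlinear increment $\sum_s K_i(t_n,x^k,x^s)e^{n,s}h^d$ multiplied by the numerical solution. Testing with $e^{n+1}$ and summing $h^d$ over $k$, the left side bounds $\tfrac12(\|e^{n+1}\|_{S^0_{\mathbb{Z}_{M}}}^2-\|e^n\|_{S^0_{\mathbb{Z}_{M}}}^2)$ from below. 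Discrete summation by parts (with no boundary contribution, since $e^{n+1}\in S^0_{\mathbb{Z}_{M}}$) turns $\langle\mathcal A_h e^{n+1},e^{n+1}\rangle$ into a quadratic form that is coercive by the uniform ellipticity (A4), dominating a positive multiple of the squared discrete gradient $\|\nabla_h e^{n+1}\|^2$, with the off-diagonal mixed-derivative stencils controlled by the same $\gamma_1,\gamma_2$ bounds. The explicit convection, after summation by parts, is estimated by $C\|e^n\|_{S^0_{\mathbb{Z}_{M}}}\|\nabla_h e^{n+1}\|$ using boundedness of $f_i$ and $\mathcal S_i$, and the nonlinear increment is Lipschitz in $e^n$ with an $h$-independent constant via the discrete Cauchy--Schwarz inequality and $\int_{\mathcal D}|K_i|^2dy\le L$ (A1). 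Young's inequality then absorbs the $\|\nabla_h e^{n+1}\|$ factors into the coercive diffusion term, leaving $\|e^{n+1}\|_{S^0_{\mathbb{Z}_{M}}}^2\le\|e^n\|_{S^0_{\mathbb{Z}_{M}}}^2+C\kappa(\|e^n\|_{S^0_{\mathbb{Z}_{M}}}^2+\|e^{n+1}\|_{S^0_{\mathbb{Z}_{M}}}^2)+C\kappa\|\tau^n\|_{S^0_{\mathbb{Z}_{M}}}^2$.

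Finally I would close with the discrete Gronwall inequality: for $\kappa$ small the $\|e^{n+1}\|^2$ term moves to the left, and summing over $n$ with $e^0=0$ and $N\kappa=T$ gives $\max_n\|e^n\|_{S^0_{\mathbb{Z}_{M}}}\le Ce^{CT}\max_n\|\tau^n\|_{S^0_{\mathbb{Z}_{M}}}\le C(\kappa+h^2)$, the asserted bound. I expect the stability estimate to be the real obstacle: one must verify that the Lipschitz constant of the discrete convolution $\mathcal S_i$ is genuinely independent of $M$ (equivalently of $h$), and that the first-order-difference explicit convection is fully absorbed by the coercive implicit diffusion, so that no restrictive mesh-ratio condition beyond the smallness needed to invert the implicit linear system \eqref{e3.2.15} is required; the consistency and Gronwall steps are then routine given the $H^2$-regularity of $p_{\mathcal D}$.
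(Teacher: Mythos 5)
Your proposal cannot diverge from the paper's argument for a simple reason: the paper gives no proof of this lemma at all --- it states that the result follows from ``the classical error analysis theorem'' of \cite{LeV2007} and explicitly omits the proof. What you have written is precisely the consistency--stability--convergence argument that citation stands for, so your route is the intended one, and you add genuine value by isolating the one ingredient the classical linear theory does \emph{not} cover, namely the solution-dependent discrete convolution ${\mathcal S}_i^{n,k}$, which you correctly reduce to a Lipschitz perturbation (with $h$-independent constant via discrete Cauchy--Schwarz and the bound $\int_{\mathbb{R}^d}|K|^2dy\leq L$ from (A1)) absorbed by discrete Gronwall. The stability step you flag as the real obstacle does in fact go through: for frozen coefficients a von Neumann computation shows the combined stencil has symbol bounded above by $-\frac{4\gamma_1}{h^2}\sum_i\sin^2(\theta_i/2)$ --- writing $u_i=\sin(\theta_i/2)$, $c_i=\cos(\theta_i/2)$, the quadratic form is $\sum_i a_{ii}u_i^2+\sum_{i\neq j}a_{ij}(u_ic_i)(u_jc_j)\geq \sum_i a_{ii}u_i^4+\gamma_1\sum_i u_i^2c_i^2\geq\gamma_1\sum_i u_i^2$ by (A4) --- so uniform ellipticity alone suffices, with variable coefficients handled by the usual coefficient-freezing plus lower-order commutators using the bounded derivatives of $a_{ij}$ from (A3). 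Two points you should tighten: first, your bound on ${\mathcal N}^n$ multiplies the increment by the \emph{numerical} solution, so you need a uniform discrete bound on $p^n_{{\mathcal D},\mathbb{Z}_M}$, obtained by a separate stability estimate for the scheme or by induction alongside the error bound; second, the $H^2(\mathcal D)$ regularity you invoke does not justify the Taylor remainders --- an $O(h^2)$ truncation error for second differences requires fourth spatial derivatives (say $C^{1,4}$ or $H^4$ regularity), a hypothesis the paper itself tacitly assumes when it appeals to the classical theorem, so you should state it explicitly rather than attribute it to the stated $H^2$ assumption.
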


\subsection{An approximation to the mean field equation}
In this section, we apply the approximate density function $p_{\mathcal D}(t,x)$ to construct an SDE to approximate the mean field SDE \eqref{e3.2.4}. We also derive error estimates between their solutions.

Let  $I_{\mathcal D}(x)$ be an indicator function on $\mathcal D$, i.e. $I_{\mathcal D}(x) = 1$ if $x \in{\mathcal D}$ and $I_{\mathcal D}(x) = 0$ otherwise.  By $p_{\mathcal D} \cdot I_{\mathcal D}(x)$ we denote a function defined on the whole space $\mathbb{R}^d$, which can be regarded as the zero extension of $p_{\mathcal D}$. If no confusion occurs, we still use $p_{\mathcal D}$ to denote the extended function $p_{\mathcal D} \cdot I_{\mathcal D}(x)$.  Furthermore, we also assume $p_{\mathcal D}$ together with its derivatives converges to the counterparts of the solution $p$ to  \eqref{e3.2.2}  as $\alpha\to\infty$, i.e.\
\begin{equation*}
  \Phi(\alpha):=\displaystyle\sup_{t\in[0,T]}\|p(t,\cdot)-p_{\mathcal D}(t,\cdot)\|_{L^1(\mathbb{R}^d)}+\|p(t,\cdot)-p_{\mathcal D}(t,\cdot)\|_{H^2(\mathbb{R}^d)}\to 0 \text{ as } \alpha\to \infty.
  \end{equation*}
Then  there exists a constant $C>0$ such that for $t\in[0,T]$ and large $\alpha$
\begin{equation}\label{e3.3.6}
\begin{aligned}
 \int_{\mathcal D}p_{\mathcal D}(t,x)dx&\leq\int_{\mathbb{R}^d}p(t,x)dx+\int_{\mathbb{R}^d}|p(t,x)-p_{\mathcal D}(t,x)|dx\leq1+\Phi(\alpha)\leq C,\\
 \|p_{\mathcal D}(t,\cdot)\|_{H^2(\mathcal D)}&\leq \|p(t,\cdot)\|_{H^2(\mathbb{R}^d)}+\|p(t,\cdot)-p_{\mathcal D}(t,\cdot)\|_{H^2(\mathbb{R}^d)}\leq C+\Phi(\alpha)\leq C.
 \end{aligned}
\end{equation}

We use the approximate density function $p_{\mathcal D}$ to replace the real density function $p$ in mean field SDE \eqref{e3.2.4}, and hence obtain an SDE
\begin{equation}\label{e3.3.8}
dX_{\mathcal D}(t)=f(t, X_{\mathcal D}(t))dt+\int_{\mathbb{R}^d}K(t,X_{\mathcal D}(t),y) p_{\mathcal D}(t,y)dydt+\sigma(t, X_{\mathcal D}(t))dW(t),
\end{equation}
with the same initial value condition $X_{\mathcal D}(0)=X(0)$. Obviously,  \eqref{e3.3.8} is an approximation to the mean field SDE \eqref{e3.2.4}.
Under assumptions (A1)-(A3), \eqref{e3.3.8} has a unique solution $X_{\mathcal D}(t)$  satisfying a priori estimate, cf.\ \cite{Eva2013}
\begin{equation}\label{e3.3.11}
 E|X_{\mathcal D}(t)|^2\leq C(1+E|X_0|^2),\quad \forall t\in[0,T].
\end{equation}

The next lemma investigates the error estimate between the solutions of mean field SDE \eqref{e3.2.4} and SDE \eqref{e3.3.8}.

\begin{lemma}\label{le3.3.2}
Assume (A1)-(A3) hold. Let $X(t)$ and $X_{\mathcal D}(t)$ be solutions of \eqref{e3.2.4} and \eqref{e3.3.8}, respectively. Then, there holds
\begin{equation*}
  E|X(t)-X_{\mathcal D}(t)|^2\leq C\Phi^2(\alpha),\quad \forall t\in[0,T],
\end{equation*}
where $C=C(L,T)>0$ is a constant.
\end{lemma}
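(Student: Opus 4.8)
**

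The plan is to estimate the difference $X(t)-X_{\mathcal D}(t)$ directly by subtracting the two integral forms of equations \eqref{e3.2.4} and \eqref{e3.3.8}, taking expectations of the squared norm, and closing the argument with Gr\"onwall's inequality. First I would write
\begin{equation*}
  X(t)-X_{\mathcal D}(t)=\int_0^t\bigl[f(s,X(s))-f(s,X_{\mathcal D}(s))\bigr]ds+\int_0^t\bigl[R_1(s)+R_2(s)\bigr]ds+\int_0^t\bigl[\sigma(s,X(s))-\sigma(s,X_{\mathcal D}(s))\bigr]dW(s),
\end{equation*}
where the drift discrepancy coming from the interaction term is split into
\begin{equation*}
  R_1(s)=\int_{\mathbb{R}^d}\bigl[K(s,X(s),y)-K(s,X_{\mathcal D}(s),y)\bigr]p(s,y)\,dy,\qquad R_2(s)=\int_{\mathbb{R}^d}K(s,X_{\mathcal D}(s),y)\bigl[p(s,y)-p_{\mathcal D}(s,y)\bigr]dy.
\end{equation*}
The term $R_1$ carries the dependence on the state difference $X(s)-X_{\mathcal D}(s)$ and will be absorbed into the Gr\"onwall constant, whereas $R_2$ is the genuine approximation error and is where the factor $\Phi(\alpha)$ enters.

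The key steps, in order, are as follows. I would apply the elementary inequality $|a+b+c|^2\le 3(|a|^2+|b|^2+|c|^2)$, take expectations, and handle the stochastic integral via It\^o's isometry so that $E\bigl|\int_0^t[\sigma(s,X)-\sigma(s,X_{\mathcal D})]dW\bigr|^2=\int_0^t E|\sigma(s,X(s))-\sigma(s,X_{\mathcal D}(s))|_F^2\,ds$. The Lipschitz bound \eqref{e3.2.5} then controls both the $f$-difference and the $\sigma$-difference by $L^2\,E|X(s)-X_{\mathcal D}(s)|^2$. For $R_1$, I would use the boundedness of the derivatives of $K$ from (A1) to get a Lipschitz estimate $|K(s,x,y)-K(s,\bar x,y)|\le L|x-\bar x|$ pointwise in $y$, then Cauchy--Schwarz in $y$ together with $\int_{\mathbb{R}^d}p(s,y)\,dy=1$ to obtain $|R_1(s)|\le C|X(s)-X_{\mathcal D}(s)|$. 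For the crucial term $R_2$, I would bound $|R_2(s)|\le \|K(s,X_{\mathcal D}(s),\cdot)\|_{L^\infty}\,\|p(s,\cdot)-p_{\mathcal D}(s,\cdot)\|_{L^1(\mathbb{R}^d)}$ using that $K$ is bounded by (A1), so that $E|R_2(s)|^2\le C\,\Phi^2(\alpha)$ uniformly in $s$. Collecting terms gives
\begin{equation*}
  E|X(t)-X_{\mathcal D}(t)|^2\le C\int_0^t E|X(s)-X_{\mathcal D}(s)|^2\,ds+C\,T\,\Phi^2(\alpha),
\end{equation*}
and Gr\"onwall's inequality yields the claimed bound $E|X(t)-X_{\mathcal D}(t)|^2\le C\Phi^2(\alpha)$ with $C=C(L,T)$.

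The main obstacle I anticipate is the careful treatment of the interaction term and, in particular, recognizing that only the $L^1$ part of $\Phi(\alpha)$ is needed for $R_2$: since $K$ is uniformly bounded, pairing it against $p-p_{\mathcal D}$ calls for an $L^1$ estimate on the density error rather than an $L^2$ or $H^2$ one, which is why $\Phi(\alpha)$ was defined to include the $L^1(\mathbb{R}^d)$ norm. One subtlety worth verifying is that the zero-extension convention for $p_{\mathcal D}$ makes the integral $\int_{\mathbb{R}^d}K(s,X_{\mathcal D}(s),y)p_{\mathcal D}(s,y)\,dy$ in \eqref{e3.3.8} agree with the domain integral $\int_{\mathcal D}$ appearing in the SDE, so that the splitting above is legitimate; this is precisely the purpose of the extension. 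The rest is routine, relying on the a priori moment bounds \eqref{e3.2.7} and \eqref{e3.3.11} only insofar as they guarantee the integrals are finite.
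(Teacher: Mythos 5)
Your proposal is correct and follows essentially the same route as the paper: the same splitting of the interaction drift into the Lipschitz part $R_1$ (bounded via (A1) and $\int_{\mathbb{R}^d}p(s,y)\,dy=1$) and the density-error part $R_2$ (bounded by $C\Phi(\alpha)$ through the $L^1$ norm), followed by Gr\"onwall's inequality. The only cosmetic difference is that the paper expands $|X(t)-X_{\mathcal D}(t)|^2$ via It\^o's formula, with the martingale term vanishing under expectation, whereas you square the integral identity directly and use It\^o's isometry together with Cauchy--Schwarz in time; both yield the same Gr\"onwall estimate.
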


\begin{proof}
According to \eqref{e3.2.4} and \eqref{e3.3.8}, we have
\begin{equation*}
\begin{aligned}
  &{X}(t)-{X}_{\mathcal D}(t)=\int_0^tf(s,{X}(s))-f(s,{X}_{\mathcal D}(s))ds\\
  &+\int_0^t\int_{\mathbb{R}^d}\!K(s,{X}(s),y){p}(s,y)-K(s,{X}_{\mathcal D}(s),y){p}_{\mathcal D}(s,y)dyds\\
  &+\int_0^t\sigma(s,{X}(s))- \sigma(s,{X}_{\mathcal D}(s))dW(s).\\
    \end{aligned}
\end{equation*}
By It\'o's formula, we get
\begin{equation*}
\begin{aligned}
  &|{X}(t)-{X}_{\mathcal D}(t)|^2=2\int_0^t\langle {X}(s)-{X}_{\mathcal D}(s),f(s, {X}(s))-f(s, {X}_{\mathcal D}(s))\rangle ds\\
  &+2\int_0^t\langle {X}(s)-{X}_{\mathcal D}(s),\int_{\mathbb{R}^d}K(s, {X}(s),y){p}(s,y)-K(s, {X}_{\mathcal D}(s),y){p}_{\mathcal D}(s,y)dy\rangle ds\\
  &+\int_0^t \text{tr}[(\sigma(s, {X}(s))-\sigma(s, {X}_{\mathcal D}(s)))(\sigma(s, {X}(s))-\sigma(s, {X}_{\mathcal D}(s)))^T]ds\\
  &+2\int_0^t \langle{X}(s)-{X}_{\mathcal D}(s),(\sigma(s,{X}(s))- \sigma(s,{X}_{\mathcal D}(s))dW(s)\rangle.
\end{aligned}
\end{equation*}
Taking  expectations on both sides of above equation and applying \eqref{e3.2.5} and Cauchy-Schwarz inequality, we obtain
\begin{equation*}
\begin{aligned}
  &E|{X}(t)-{X}_{\mathcal D}(t)|^2=2E\int_0^t\langle {X}(s)-{X}_{\mathcal D}(s),f(s, {X}(s))-f(s, {X}_{\mathcal D}(s))\rangle ds\\
  &+2E\int_0^t\langle {X}(s)-{X}_{\mathcal D}(s),\int_{\mathbb{R}^d}K(s, {X}(s),y){p}(s,y)-K(s, {X}_{\mathcal D}(s),y){p}_{\mathcal D}(s,y)dy\rangle ds\\
  &+E\int_0^t\text{tr}[(\sigma(s, {X}(s))-\sigma(s, {X}_{\mathcal D}(s)))(\sigma(s, {X}(s))-\sigma(s, {X}_{\mathcal D}(s)))^T]ds\\
  \leq&2E\int_0^t|{X}(s)-{X}_{\mathcal D}(s)|\cdot |f(s, {X}(s))-f(s, {X}_{\mathcal D}(s))| ds\\
  &+2E\int_0^t| {X}(s)-{X}_{\mathcal D}(s)|\cdot |\int_{\mathbb{R}^d}K(s, {X}(s),y){p}(s,y)-K(s, {X}_{\mathcal D}(s),y){p}_{\mathcal D}(s,y)dy| ds\\
  &+E\int_0^t|\sigma(s, {X}(s))-\sigma(s, {X}_{\mathcal D}(s))|_F^2ds\\
  \leq& 2E\int_0^tL|{X}(s)-{X}_{\mathcal D}(s)|^2ds+E\int_0^t|{X}(s)-{X}_{\mathcal D}(s)|^2ds\\
  &+E\int_0^t|\int_{\mathbb{R}^d}K(s, {X}(s),y){p}(s,y)-K(s, {X}_{\mathcal D}(s),y){p}_{\mathcal D}(s,y)dy|^2 ds\\
  &+E\int_0^t L^2|{X}(s)-{X}_{\mathcal D}(s)|^2ds.
\end{aligned}
\end{equation*}
Noticing that
\begin{equation*}
\begin{aligned}
  &|\int_{\mathbb{R}^d}K(s, {X}(s),y){p}(s,y)-K(s, {X}_{\mathcal D}(s),y){p}_{\mathcal D}(s,y)dy|\\
  \leq&\int_{\mathbb{R}^d}|K(s, {X}(s),y)-K(s, {X}_{\mathcal D}(s),y)|p(s,y)dy  \\
  &+\int_{\mathbb{R}^d}|K(s, {X}_{\mathcal D}(s),y)|\cdot | {p}(s,y)-{p}_{\mathcal D}(s,y)|dy\\
  \leq&L|{X}(s)-{X}_{\mathcal D}(s)|\int_{\mathbb{R}^d}p(s,y)dy+C\Phi(\alpha)
  \leq L|{X}(s)-{X}_{\mathcal D}(s)|+C\Phi(\alpha).
\end{aligned}
\end{equation*}
The above two inequalities imply
\begin{equation*}
\begin{aligned}
  E|X(t)-{X}_{\mathcal D}(t)|^2\leq C(L)\int_0^tE|{X}(s)-{X}_{\mathcal D}(s)|^2 ds+C(L)T\Phi^2(\alpha).
  \end{aligned}
\end{equation*}
 Then the proof follows from Gronwall's inequality.
\end{proof}

\section{A numerical approximation to the mean field SDE}\label{s3.4}
In this section, based on the numerical solution to \eqref{e3.3.3}-\eqref{e3.3.4}, we construct an auxiliary SDE to approximate \eqref{e3.3.8} and investigate the corresponding error estimates. Then, we apply Euler-Maruyama method to the auxiliary SDE to set up a numerical scheme which is used to approximate the  mean field SDE \eqref{e3.2.4}.  Finally, we  study the error estimates between the numerical solution  and the exact solution of the mean field SDE.

\subsection{An auxiliary SDE and error estimates}\label{s3.4.1}
%
Define a piece-wise constant function  $p_{\kappa,h}(t,x)={p}_{{\mathcal D}}^{n,k}$ for $(t,x)\in [t_n,t_{n+1})\times D_k$ and extend its domain  to $\mathbb{R}^d$ by $p_{\kappa,h}(t,x)=0$ for $x\in {\mathcal D}^c$. We replace the approximate density function $p_{\mathcal D}(t,x)$ in \eqref{e3.3.8} by $p_{\kappa,h}(t,x)$, and then set up an auxiliary SDE for $t\in[0,T]$
\begin{equation}\label{e3.3.2}
  d \hat{X}(t)=f(t, \hat{X}(t))dt+\int_{\mathbb{R}^d}K(t,\hat{X}(t),y)p_{\kappa,h}(t,x)dydt+\sigma(t, \hat{X}(t))dW(t),
\end{equation}
with  $\hat{X}(0)=X(0)$.
Under assumptions (A1)-(A3), \eqref{e3.3.2} has a unique solution $\hat X(t)$ which has  second order moment estimate, cf.\ \cite{Eva2013}
\begin{equation}\label{e3.3.10}
  E|\hat X(t)|^2\leq C(1+E|X_0|^2),\quad \forall t\in[0,T],
\end{equation}
where $C>0$ is a constant.

\begin{lemma}\label{le3.3.3}
Assume (A1)-(A4) hold. Let $X_{\mathcal D}(t)$ and $\hat X(t)$ be solutions of \eqref{e3.3.8} and \eqref{e3.3.2}, respectively. Then, there holds
\begin{equation*}
  E|X_{\mathcal D}(t)-\hat X(t)|^2\leq C(\kappa^2+h^2),\quad \forall t\in[0,T],
\end{equation*}
where $C=C(L,T)>0$ is a constant.
\end{lemma}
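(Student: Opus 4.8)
The plan is to mirror the argument of Lemma \ref{le3.3.2}, treating \eqref{e3.3.8} and \eqref{e3.3.2} as two SDEs driven by the same Brownian motion with identical initial data, so that $X_{\mathcal D}(t)-\hat X(t)$ starts from zero and only the drift discrepancy has to be controlled. First I would write the difference as an integral, apply It\^o's formula to $|X_{\mathcal D}(t)-\hat X(t)|^2$, and take expectations so that the stochastic integral vanishes. The contributions of $f$ and $\sigma$ are handled exactly as before via the Lipschitz bound \eqref{e3.2.5}, producing terms of the form $C\int_0^t E|X_{\mathcal D}(s)-\hat X(s)|^2\,ds$. The only genuinely new ingredient is the interaction term, where one must compare $\int_{\mathbb R^d}K(s,X_{\mathcal D}(s),y)p_{\mathcal D}(s,y)\,dy$ against $\int_{\mathbb R^d}K(s,\hat X(s),y)p_{\kappa,h}(s,y)\,dy$.

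To treat the interaction term I would split the difference as
\[
\begin{aligned}
&\int_{\mathbb R^d}\big[K(s,X_{\mathcal D}(s),y)-K(s,\hat X(s),y)\big]p_{\mathcal D}(s,y)\,dy\\
&\quad+\int_{\mathbb R^d}K(s,\hat X(s),y)\big[p_{\mathcal D}(s,y)-p_{\kappa,h}(s,y)\big]\,dy.
\end{aligned}
\]
The first piece is bounded by $L|X_{\mathcal D}(s)-\hat X(s)|\int_{\mathcal D}p_{\mathcal D}(s,y)\,dy\le C|X_{\mathcal D}(s)-\hat X(s)|$, using the Lipschitz continuity of $K$ in its spatial argument (A1) and the mass bound \eqref{e3.3.6}; this again feeds the Gronwall term. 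The second piece is controlled by Cauchy--Schwarz together with the uniform bound $\int_{\mathbb R^d}|K(s,x,y)|^2\,dy\le L$ from (A1), which reduces the whole estimate to bounding $\|p_{\mathcal D}(s,\cdot)-p_{\kappa,h}(s,\cdot)\|_{L^2(\mathbb R^d)}$.

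The crux is therefore to establish $\|p_{\mathcal D}(s,\cdot)-p_{\kappa,h}(s,\cdot)\|_{L^2(\mathbb R^d)}\le C(\kappa+h)$. For $s\in[t_n,t_{n+1})$ and $y\in D_k$ I would decompose
\[
\begin{aligned}
p_{\mathcal D}(s,y)-p_{\kappa,h}(s,y)&=\big[p_{\mathcal D}(s,y)-p_{\mathcal D}(t_n,y)\big]\\
&\quad+\big[p_{\mathcal D}(t_n,y)-p_{\mathcal D}(t_n,x^k)\big]+\big[p_{\mathcal D}(t_n,x^k)-p^{n,k}_{{\mathcal D}}\big].
\end{aligned}
\]
The first bracket is $O(\kappa)$ in $L^2(\mathcal D)$: expressing $\partial_t p_{\mathcal D}$ through the equation \eqref{e3.3.3} bounds it by the $H^2$-norm of $p_{\mathcal D}$, which is uniformly controlled by \eqref{e3.3.6}, so the time increment over $[t_n,s]$ costs at most $C\kappa$. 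The second bracket is the piecewise-constant interpolation error over cells of diameter $O(h)$ and is $O(h)$ in $L^2(\mathcal D)$ by the standard $H^1$ approximation estimate, again invoking the uniform $H^2$ bound. The third bracket, once rewritten as the continuous $L^2(\mathcal D)$-norm of the associated piecewise-constant function, equals the discrete error $\|(p_{\mathcal D}(t_n,x^k))_{k\in\mathbb{Z}_{M}}-p_{{\mathcal D},\mathbb{Z}_{M}}^n\|_{S^0_{\mathbb{Z}_{M}}}$, which is $O(\kappa+h^2)$ by Lemma \ref{le3.3.1}. Summing the three contributions yields the desired $O(\kappa+h)$.

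With these pieces assembled, I expect to reach
\[
E|X_{\mathcal D}(t)-\hat X(t)|^2\le C\int_0^t E|X_{\mathcal D}(s)-\hat X(s)|^2\,ds+CT(\kappa^2+h^2),
\]
after which Gronwall's inequality completes the proof. I anticipate the main obstacle to be the $L^2$-in-space comparison of $p_{\mathcal D}$ and $p_{\kappa,h}$; in particular the passage between the discrete $S^0_{\mathbb{Z}_{M}}$-norm supplied by Lemma \ref{le3.3.1} and the continuous $L^2(\mathbb R^d)$-norm demanded by Cauchy--Schwarz, together with justifying the time-Lipschitz and spatial-interpolation estimates solely from the assumed $H^2$-regularity and the uniform bounds \eqref{e3.3.6}.
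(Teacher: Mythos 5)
Your proposal is correct and follows essentially the same route as the paper: It\^o's formula plus Lipschitz bounds for $f$ and $\sigma$, the identical two-way splitting of the interaction term using (A1) and the mass bound \eqref{e3.3.6}, reduction via Cauchy--Schwarz to $\|p_{\mathcal D}-p_{\kappa,h}\|_{L^2(\mathcal D)}$, the same decomposition into continuous-increment and discrete-error parts with Lemma \ref{le3.3.1}, and Gronwall to finish. The only cosmetic difference is that you bound the time and space increments separately via the PDE (an $L^2$ bound on $\partial_t p_{\mathcal D}$) and a standard interpolation estimate, where the paper uses a joint mean value theorem at an intermediate point $(t_n^\theta,y_\theta^k)$ --- if anything your justification is slightly more careful about the regularity actually assumed.
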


\begin{proof}
\eqref{e3.3.8} minus \eqref{e3.3.2} yields
\begin{equation*}
\begin{aligned}
  X_{\mathcal D}(t)-\hat{X}(t)=&\int_0^tf(s,X_{\mathcal D}(s))-f(s,\hat{X}(s))ds\\
  &+\int_0^t\int_{\mathbb{R}^d}K(s,X_{\mathcal D}(s),y){p}_{\mathcal D}(s,y)-K(s,\hat{X}(s),y)p_{\kappa,h}(s,y)dyds\\
  &+\int_0^t\sigma(s,X_{\mathcal D}(s))- \sigma(s,\hat{X}(s))dW(s),
    \end{aligned}
\end{equation*}
By It\'o's formula, we get
\begin{equation*}
\begin{aligned}
  &|X_{\mathcal D}(t)-\hat{X}(t)|^2=2\int_0^t\langle X_{\mathcal D}(s)-\hat{X}(s),f(s, {X}_{\mathcal D}(s))-f(s, \hat{X}(s))\rangle ds\\
  &+2\int_0^t\langle X_{\mathcal D}(s)-\hat{X}(s),\int_{\mathbb{R}^d}K(s, {X}_{\mathcal D}(s),y){p}_{\mathcal D}(s,y)-K(s, \hat{X}(s),y)p_{\kappa,h}(s,y)dy\rangle ds\\
  &+\int_0^t\text{tr}[(\sigma(s, X_{\mathcal D}(s))-\sigma(s, \hat{X}(s)))(\sigma(s, X_{\mathcal D}(s))-\sigma(s, \hat{X}(s)))^T]ds\\
  &+2\int_0^t \langle X_{\mathcal D}(s)-\hat{X}(s),(\sigma(s, X_{\mathcal D}(s))-\sigma(s, \hat{X}(s)))dW(s)\rangle.
\end{aligned}
\end{equation*}
Similar to the proof in Lemma \ref{le3.3.2} and by cauchy-schwarz inequality, we get
\begin{equation*}
\begin{aligned}
 &E|X_{\mathcal D}(t)-\hat{X}(t)|^2\\
  \leq& CE\int_0^t |X_{\mathcal D}(s)-\hat{X}(s)|^2ds\\
  &+E\int_0^t|\int_{\mathbb{R}^d}K(s, {X}_{\mathcal D}(s),y){p}_{\mathcal D}(s,y)-K(s, \hat{X}(s),y)p_{\kappa,h}(s,y)dy|^2 ds\\
  \leq&CE\int_0^t |X_{\mathcal D}(s)-\hat{X}(s)|^2ds\\
  &+E\int_0^t2\big(\int_{\mathbb{R}^d}|K(s, {X}_{\mathcal D}(s),y)-K(s, \hat{X}(s),y)|{p}_{\mathcal D}(s,y)dy \big)^2ds\\
  &+E\int_0^t2\big(\int_{\mathbb{R}^d}|K(s, \hat{X}(s),y)|\cdot | {p}_{\mathcal D}(s,y)-p_{\kappa,h}(s,y)|dy\big)^2ds\\
  \leq&CE\int_0^t |X_{\mathcal D}(s)-\hat{X}(s)|^2ds\\
  &+E\int_0^t2L^2|{X}(s)-{X}_{\mathcal D}(s)|^2\big(\int_{\mathcal D}{p}_{\mathcal D}(s,y)dy\big)^2ds\\
  &+E\int_0^t2\int_{\mathbb{R}^d}|K(s, \hat{X}(s),y)|^2dy\int_{\mathcal D}| {p}_{\mathcal D}(s,y)-p_{\kappa,h}(s,y)|^2dyds,
\end{aligned}
\end{equation*}
where $C=C(L)>0$ is a constant.

By mean value theorem, Lemma \ref{le3.3.1} and \eqref{e3.3.6}, we have for any $n=0,1\cdots N-1$ and  $s\in[t_n,t_{n+1}]$
\begin{equation*}
\begin{aligned}
  &\int_{\mathcal D}|{p}_{\mathcal D}(s,y)-p_{\kappa,h}(s,y)|^2dy\\
  =&\sum_{k\in\mathbb{Z}_M}\int_{D_k}|{p}_{\mathcal D}(s,y)-{p}_{\mathcal D}^{n,k}|^2dy\\
  \leq&2\sum_{k\in\mathbb{Z}_M}\int_{D_k}|{p}_{\mathcal D}(s,y)-{p}_{\mathcal D}(t_n,x^k)|^2dy+2\sum_{k\in\mathbb{Z}_M}\int_{D_k}|{p}_{\mathcal D}(t_n,x^k)-{p}_{\mathcal D}^{n,k}|^2dy\\
  \leq&2\sum_{k\in\mathbb{Z}_M}\int_{D_k}|\frac{\partial p_{\mathcal D}(t_{n}^\theta,y^k_\theta )}{\partial t}(s-t_n)+\sum_{i=1}^d\frac{\partial p_{\mathcal D}(t_{n}^\theta,y^k_\theta )}{\partial y_i}(y_i-x^{k}_i)|^2dy\\
  &+2\|(p_{\mathcal D}(t_n,x^k))_{k\in\mathbb{Z}_{M}}- p_{{\mathcal D},\mathbb{Z}_{M}}^n\|_{S^0_{\mathbb{Z}_{M}}}^2\\
   \leq&2(d+1) \int_{\mathcal D}\kappa^2|\frac{\partial p_{\mathcal D}(t_{n}^\theta,y^k_\theta )}{\partial t}|^2dy+h^2\sum_{i=1}^d|\frac{\partial p_{\mathcal D}(t_{n}^\theta,y^k_\theta )}{\partial y_i}|^2dy+C(\kappa^2+h^4)\\
  \leq& C(\kappa^2+h^2),
\end{aligned}
\end{equation*}
where $(t_{n}^\theta, y^k_\theta) = \theta(s, y) + (1-\theta)(t_n, x^k)$ and $C>0$ is a constant independent of $\kappa$ and $h$.
The above two inequalities imply
\begin{equation*}
\begin{aligned}
E|X_{\mathcal D}(t)-\hat{X}(t)|^2
\leq C(\kappa^2+h^2)+ C\int_0^tE|X_{\mathcal D}(s)-\hat{X}(s)|^2 ds.
\end{aligned}
\end{equation*}
 Then the proof follows from Gronwall's inequality.
\end{proof}

\subsection{A numerical approximation to the auxiliary SDE}
In this section,  we apply Euler-Maruyama method to auxiliary equation \eqref{e3.3.2}   to construct a numerical scheme for the original mean field SDE \eqref{e3.2.1} and derive error estimates.

We take the same temporal step size $\kappa=\frac{T }{N}$ and  $t_{n}=n\kappa$ $(n=0,1,\cdots,N)$  as in section \ref{s3.3.2} and apply Euler-Maruyama scheme for  SDE \eqref{e3.3.2}
\begin{equation}\label{e3.4.4}
\begin{aligned}
  \hat{X}^{n+1} =& \hat{X}^{n}\!+\! f(t_n,\hat{X}_{\kappa}^{n})\kappa\!+\!\kappa\!\int_{\mathbb{R}^d}\!K(t_n,\hat{X}^{n},y)p_{\kappa,h}(t_n,y) dy\!+\!\sigma(t_n,\hat{X}^{n})\Delta W_n.
  \end{aligned}
\end{equation}
where  $\Delta W_n = W(t_{n+1})-W(t_{n})$. From \cite{KloPla1992,Sau2013,LordPoweShard2014}, it follows that
\begin{equation}\label{e3.4.2}
  E|\hat{X}(t_n)-\hat{X}^{n}|^2\leq C\kappa,\quad  n=1,2,\cdots, N,
\end{equation}
where $\hat{X}(t)$ is the solution of auxiliary equation \eqref{e3.3.2}. Specifically, if the function $\sigma$ is independent of $(t,x)$, where  \eqref{e3.4.4} is driven by additive noise, the convergence order for Euler-Maruyama method becomes of $1$, i.e.\
\begin{equation}\label{e3.4.3}
  E|\hat{X}(t_n)-\hat{X}^{n}|^2\leq C\kappa^2,\quad  n=1,2,\cdots, N.
\end{equation}

Now, we are ready to state and  prove the main result in this paper.

\begin{theorem}
Assume (A1)-(A4) hold.  Let $ X(t)$ and $\hat{X}^n$ $(n=0,1,\cdots, N)$ be solutions to \eqref{e3.2.4} and \eqref{e3.4.4}, respectively. Then there exists a constant $C>0$ independent of $\kappa$ and $h$ such that
\begin{equation*}
  E|{X}(t_n)-\hat{X}^n|^2\leq C\Phi^2(\alpha)+C(\kappa+h^2),\quad n=1,2,\cdots, N.
\end{equation*}
Furthermore, if the function $\sigma$ is independent of $(t,x)$, there holds
\begin{equation*}
  E|{X}(t_n)-\hat{X}^n|^2\leq C\Phi^2(\alpha)+C(\kappa^2+h^2),\quad n=1,2,\cdots, N.
\end{equation*}
\end{theorem}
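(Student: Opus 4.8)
The plan is to assemble the three error estimates already established in the preceding lemmas into a single bound through a telescoping decomposition. The key observation is that the fully discrete scheme $\hat{X}^n$ is linked to the exact mean field solution $X(t_n)$ by a chain of three intermediate objects, $X(t_n)\to X_{\mathcal D}(t_n)\to \hat{X}(t_n)\to \hat{X}^n$, each differing from its neighbour by a quantity that is already controlled. First I would introduce the splitting
\[
E|X(t_n)-\hat{X}^n|^2 \leq 3E|X(t_n)-X_{\mathcal D}(t_n)|^2 + 3E|X_{\mathcal D}(t_n)-\hat{X}(t_n)|^2 + 3E|\hat{X}(t_n)-\hat{X}^n|^2,
\]
which follows from the elementary inequality $|a+b+c|^2\leq 3(|a|^2+|b|^2+|c|^2)$.

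Next I would bound each of the three terms by invoking the results already in hand. The first term measures the error from truncating the Fokker--Planck density and is controlled by Lemma \ref{le3.3.2}, giving $E|X(t_n)-X_{\mathcal D}(t_n)|^2\leq C\Phi^2(\alpha)$. The second term measures the error from replacing $p_{\mathcal D}$ by the piecewise-constant numerical density $p_{\kappa,h}$ and is controlled by Lemma \ref{le3.3.3}, giving $E|X_{\mathcal D}(t_n)-\hat{X}(t_n)|^2\leq C(\kappa^2+h^2)$. The third term is the Euler--Maruyama time-discretization error, controlled by \eqref{e3.4.2}, giving $E|\hat{X}(t_n)-\hat{X}^n|^2\leq C\kappa$.

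Combining the three bounds yields
\[
E|X(t_n)-\hat{X}^n|^2 \leq C\Phi^2(\alpha) + C(\kappa^2+h^2) + C\kappa \leq C\Phi^2(\alpha) + C(\kappa+h^2),
\]
where in the last step I absorb $\kappa^2$ into $\kappa$ for small $\kappa$; this establishes the first assertion. For the additive-noise case the only modification is to replace the Euler--Maruyama bound \eqref{e3.4.2} by the sharper order-one estimate \eqref{e3.4.3}, namely $E|\hat{X}(t_n)-\hat{X}^n|^2\leq C\kappa^2$, after which the same combination gives $E|X(t_n)-\hat{X}^n|^2\leq C\Phi^2(\alpha)+C(\kappa^2+h^2)$, the second assertion. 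Since every ingredient is already proved, there is no genuine obstacle here; the only points requiring minor care are verifying that the constant $C$ stays independent of $\kappa$ and $h$ and correctly selecting which Euler--Maruyama estimate applies in each of the two noise regimes.
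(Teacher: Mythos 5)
Your proposal is correct and follows essentially the same route as the paper: the identical three-term splitting $X(t_n)\to X_{\mathcal D}(t_n)\to \hat{X}(t_n)\to \hat{X}^n$ via the elementary inequality $|a+b+c|^2\leq 3(|a|^2+|b|^2+|c|^2)$, with the three pieces bounded by Lemma \ref{le3.3.2}, Lemma \ref{le3.3.3}, and the Euler--Maruyama estimates \eqref{e3.4.2} (resp.\ \eqref{e3.4.3} for additive noise). Nothing is missing; your treatment of the two noise regimes matches the paper's.
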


\begin{proof}
Let ${X}_{\mathcal D}(t)$ and $\hat{X}(t)$ be solutions to \eqref{e3.3.8} and \eqref{e3.3.2}, respectively. According to Lemmas \ref{le3.3.2} and \ref{e3.3.2} and estimate in \eqref{e3.4.2}, we have
\begin{equation*}
\begin{aligned}
  &E|{X}(t_n)-\hat{X}^n|^2\\
  \leq& 3E|X(t_n)-{X}_{\mathcal D}(t_n)|^2+
  3E|{X}_{\mathcal D}(t_n)-\hat{X}(t_n)|^2+3E|\hat{X}(t_n)-\hat X^n|^2\\
  \leq& C\Phi^2(\alpha)+C(\kappa^2+h^2)+C\kappa\\
  \leq& C\Phi^2(\alpha)+C(\kappa+h^2).
  \end{aligned}
\end{equation*}

The other estimate follows from a similar way. Thus, the proof is completed.
\end{proof}

\section{Numerical experiments}\label{s3.5}
In this section we will present three numerical experiments to illustrate the theoretical analysis.

\begin{example}\label{ex3.5.1}
Consider a  mean field SDE driven by multiplicative noise
\begin{equation}\label{e3.5.1}
\begin{aligned}
dX(t)&=0.1(X(t)+\sin{t})dt+0.1\int_{\mathbb{R}^d}\frac{\sin y}{1+y^2}\mu_t(dy)dt+\frac{1}{\sqrt{10}}X(t) dW(t),\\
X(0)&=X_0\sim N(0,1),
\end{aligned}
\end{equation}
where $\mu_t={\mathcal{L}}_{X(t)}$.
\end{example}

The corresponding Fokker-Planck equation reads
\begin{equation}\label{e3.5.2}
\begin{aligned}
\frac{\partial p(t,x)}{\partial t}&=0.1\frac{\partial [(x+\sin{t}+\int_{\mathbb{R}}\frac{\sin y}{1+y^2}p(t,y)dy)p(t,x)]}{\partial x}+\frac15\frac{\partial^2 [x^2p(t,x)]}{\partial x^2}
\end{aligned}
\end{equation}
and  $p_0(x)=\frac{1}{\sqrt{2\pi}}e^{-x^2}$.

Take $[0,T]=[0,1]$ and $\mathcal{D}=[-6,6]$. We choose $\kappa_0=2^{-14}$ and $h_0=12\cdot2^{-10}$  to calculate an approximate density function $p_{\mathcal D,0}^{n,k}$  which is  regarded as a referee exact solution to \eqref{e3.5.2}. Then, we  select different time step sizes $\kappa=2^{-9}, 2^{-10}, 2^{-11}, 2^{-12}$ to approximate \eqref{e3.5.2}, to check the corresponding temporal error estimates and convergence orders, which are presented in 
Table.\ \ref{table3.5.1} and Fig.\ \ref{f3.1}, respectively. Meanwhile, we  take $h/12=2^{-8},2^{-7},2^{-6},2^{-5}$
to carry out numerical computation and detect spatial error estimates and convergence orders, see Table.\ \ref{tab3.2} and Fig.\ \ref{f3.2}, respectively.

Denote by $p_{\kappa_0,h_0}(t,x)$ a continuous version of $p_{\mathcal D,0}^{n,k}$ as defined at the beginning of Section \ref{s3.4.1}. We simulate $10^5$ many sample trajectories using Euler-Maruyama scheme \eqref{e3.4.4} for each step size $\kappa=2^{-9}, 2^{-10}, 2^{-11}, 2^{-12}, 2^{-14}$ to verify the error estimates and convergence orders, as shown in  Table.\  \ref{tab3.3} and Fig.\ \ref{f3.3}.

The above results imply that the numerical computations for approximating PDE and SDE are effective and accurate. In order to show the validity of our method, we simulate the expectation and variance of the solution $X(t)$ to \eqref{e3.5.1} using three different methods. First, we  directly calculate the expectation and variance by virtue of the numerical density function, which are listed in the first line in Table.\ \ref{tab3.4}. Then, we use the computed  $10^4$ sample trajectories to simulate the expectation and variance, see the second line in Table.\ \ref{tab3.4}. Finally, we apply particle method with $10^3$ particles and $10^4$ sample trajectories to approximate \eqref{e3.5.1} and calculate expectation and variance, cf.\ the third line in Table.\ \ref{tab3.4}.

\begin{table*}[h]
   \centering
				\begin{tabular}{@{}llll@{}} 
                  \toprule
					$\kappa$  & error &  order \\
                    \midrule
                   $2^{-12}$&4.3019e-06& -  \\
                     $2^{-11}$&1.0036e-05&1.2221 \\
                     $2^{-10}$&2.1495e-05&1.0989\\
                     $2^{-9}$&4.4383e-05&1.0460 \\
                     \botrule
				\end{tabular}
			\caption{Error and  convergence order.}
				\label{table3.5.1}
			\centering
				\begin{tabular}{@{}llll@{}}
             \toprule
					$h$  & error &  order \\
                    \midrule
                    $0.0469$&     2.1936e-05  & -  \\
					$0.0938$&      9.2028e-05  &  2.0688 \\
					$0.1875$&      3.7104e-04 &  2.0114 \\					
                    $0.3750$&      0.0015 &  1.9833 \\
                     \botrule
				\end{tabular}
			\caption{Error and convergence order.}
				\label{tab3.2}		
\end{table*}

\begin{figure}[H]
\parbox{6cm}{
    \centering
    \includegraphics[width=5cm]{./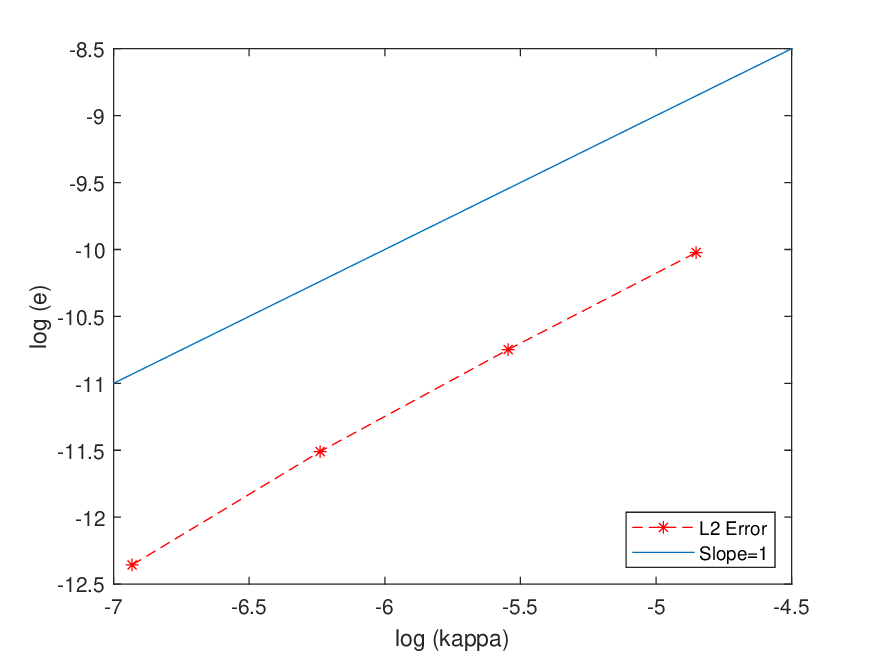}
    \caption{ $\log-\log$ error.}
    \label{f3.1}
    }
\parbox{6cm}{
    \centering
    \includegraphics[width=5cm]{./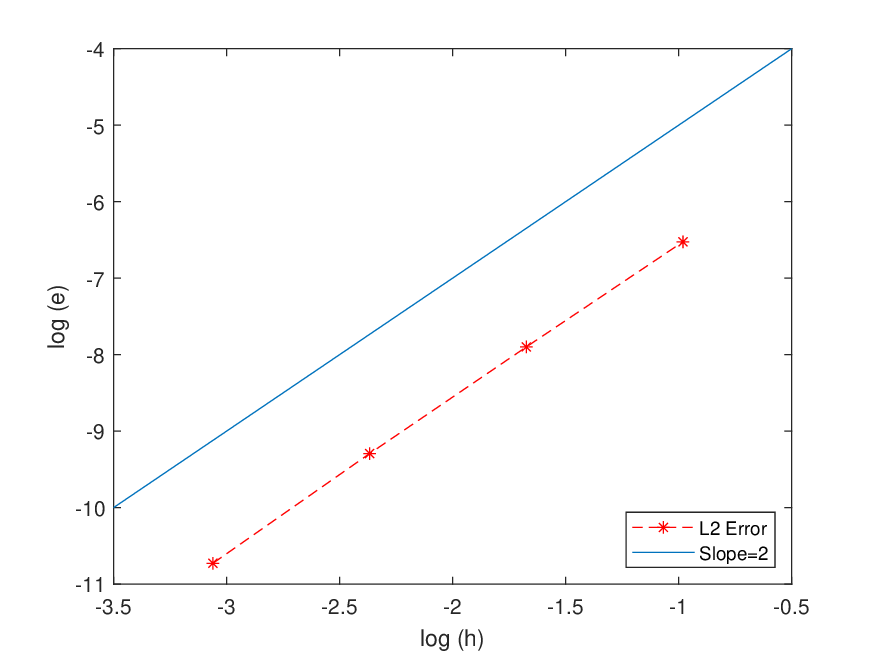}
    \caption{$\log-\log$ error.}
    \label{f3.2}
    }
\end{figure}

\begin{figure}[H]
\parbox{6cm}{
    \centering
    \begin{tabular}{ccc}\hline
                      $\kappa$  & error &  order \\
                    \hline
                    $2^{-12}$&       0.0081  & -  \\
					$2^{-11}$&      0.0128  &  0.5488  \\
					$2^{-10}$&     0.0187 &  0.5683   \\					
                    $2^{-9}$&        0.0277 & 0.5646 \\
                     \hline
\end{tabular}
    \captionof{table}{Error and convergence order.}
    \label{tab3.3}
    }
    \parbox{6cm}{
    \centering
    \includegraphics[width=5cm]{./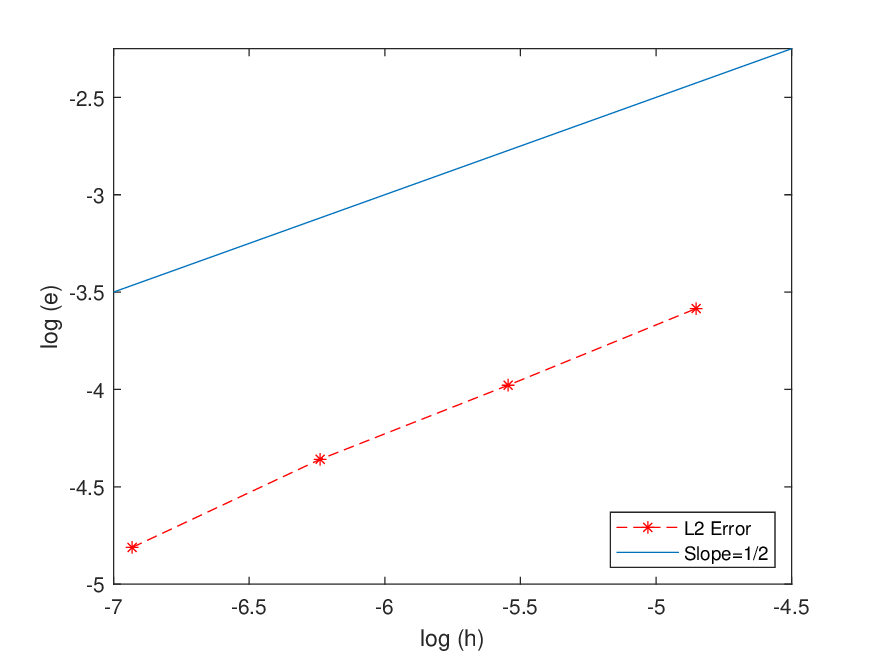}
    \caption{$\log-\log$ error.}
    \label{f3.3}
    }
\end{figure}

\begin{figure}[H]
\centering
\begin{tabular}{ccc}
\hline
                        & $E_X$&  $V_X$ \\
                    \hline
                   PDF method&0.0476& 1.3305  \\
                      Sample trajectory method&0.0474 & 1.3476\\
                    Particle method  & 0.0462 & 1.3381 \\
                     \hline
\end{tabular}
\captionof{table}{Expectation and variance.}
\label{tab3.4}
\end{figure}

\begin{example}
Consider a $2$-dimensional mean field SDE driven by additive noise
\begin{equation}\label{e3.5.5}
\begin{aligned}
dX(t)=&0.1\begin{pmatrix}
\sqrt{(X_1(t))^2+(X_2(t))^2+0.4}\\
\sqrt{(X_1(t))^2+(X_2(t))^2+0.4}
\end{pmatrix}dt+
0.1\begin{pmatrix}
\int_{\mathbb{R}^2}\frac{\sin y_1}{1+y_1^2}\frac{\sin y_2}{1+y_2^2}\mu_t(dy)\\
\int_{\mathbb{R}^2}\frac{\sin y_1}{1+y_1^2}\frac{\sin y_2}{1+y_2^2}\mu_t(dy)
\end{pmatrix}dt\\
&\!+\!\begin{pmatrix}
\frac{2}{\sqrt10}dW_1(t)+\frac{1}{\sqrt10}dW_2(t)\\
\frac{1}{\sqrt10}dW_1(t)+\frac{2}{\sqrt10}dW_2(t)
\end{pmatrix},\\
X(0)=&(X_0^1,X_0^2)\overset{iid}{\sim}N(0,0.04).
\end{aligned}
\end{equation}
\end{example}
The corresponding Fokker-Planck equation reads
\begin{equation}\label{e3.5.6}
\begin{aligned}
\frac{\partial p}{\partial t}\!=\!&
-0.1\frac{\partial [(\sqrt{x_1^2+x_2^2+0.4})+\int_{\mathbb{R}^2}\frac{\sin y_1}{1+y_1^2}\frac{\sin y_2}{1+y_2^2}p(t,y_1,y_2)dy_1dy_2p]
}{\partial x_1}+\frac14\frac{\partial^2p}{\partial x_1^2}\\
&\!-\!0.1\frac{\partial[\sqrt{x_1^2\!+\!x_2^2\!+\!0.4}\!+\!\int_{\mathbb{R}^2}\frac{\sin y_1}{1+y_1^2}\frac{\sin y_2}{1+y_2^2}p(t,y_1,y_2)dy_1dy_2)p]
}{\partial x_2}\!+\!\frac14\frac{\partial^2p}{\partial x_2^2}
\!+\!\frac25\frac{\partial^2p}{\partial x_1\partial x_2}
\end{aligned}
\end{equation}
and $p_0(x_1,x_2)=\frac{1}{0.08\pi}e^{-\frac12(\frac{x_1^2}{0.04}+\frac{x_2^2}{0.04})}$.

Choose  ${\mathcal D} = (-1,1)^2$,  fix $ \kappa_0 = 2^{-5} $ and $ h_0 = 2^{-5}$ to compute a referee exact solution to \eqref{e3.5.6}. Then, we examine the corresponding  temporal and spatial error estimates  and convergence orders and the results are shown in Tables.\ \ref{tab3.3.5}-\ref{tab3.3.6} and Figs.\ \ref{f3.3.3}-\ref{f3.3.4}, respectively.

Since the domain $\mathcal D$ influences the approximation accuracy and  in order to well approximate the density function, we reset ${\mathcal D} = (-4,4)^2$,  $\kappa_0 =2^{-8}$ and $h_0 = 2^{-3}$ to solve \eqref{e3.5.6}. Then, we scan the errors and convergence orders for Euler-Maruyama method with $10^4$ sample trajectories, and results are exhibited  in Table.\ \ref{tab3.3.7} and Fig.\ \ref{f3.3.7}. Finally, we compare the expectation and covariance obtained by three different methods in Table.\ \ref{tab3.3.4}.

\begin{table*}[h]
   \centering
		\begin{tabular}{@{}llll@{}}
		          \toprule
					 $\kappa$  & error &  order \\
                    \midrule
                    $2^{-10}$&     6.8906e-04  & -  \\
					$2^{-9}$&      0.0016  &  1.2246 \\
					$2^{-8}$&      0.0035 &  1.1039  \\					
                    $2^{-7}$&      0.0072 &  1.0561 \\
                     \botrule
				\end{tabular}
			\caption{Error and  convergence order.}
				 \label{tab3.3.5}
			\centering
				\begin{tabular}{@{}llll@{}}
		         \toprule
					  $h$  & error &  order \\
                    \midrule
                   $2^{-4}$&0.0031& -  \\
                     $2^{-3}$&0.0149&2.2582 \\
                     $2^{-2}$&0.0564&1.9209\\
                     \botrule
				\end{tabular}
			\caption{Error and convergence order.}
				\label{tab3.3.6}		
\end{table*}

\begin{figure}[H]
\parbox{6cm}{
    \centering
    \includegraphics[width=5cm]{./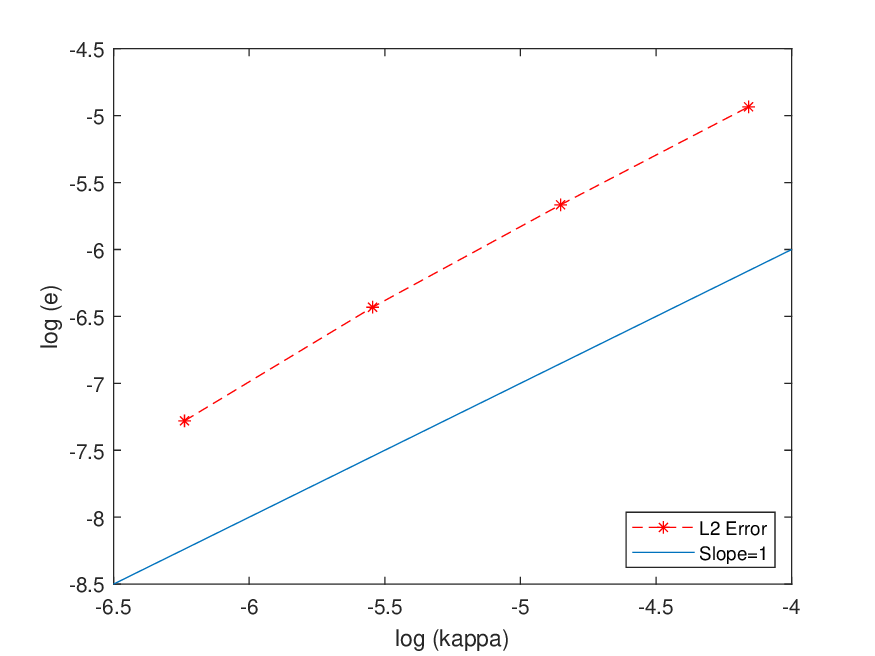}
    \caption{$\log-\log$ error.}
    \label{f3.3.3}
    }
\parbox{6cm}{
    \centering
    \includegraphics[width=5cm]{./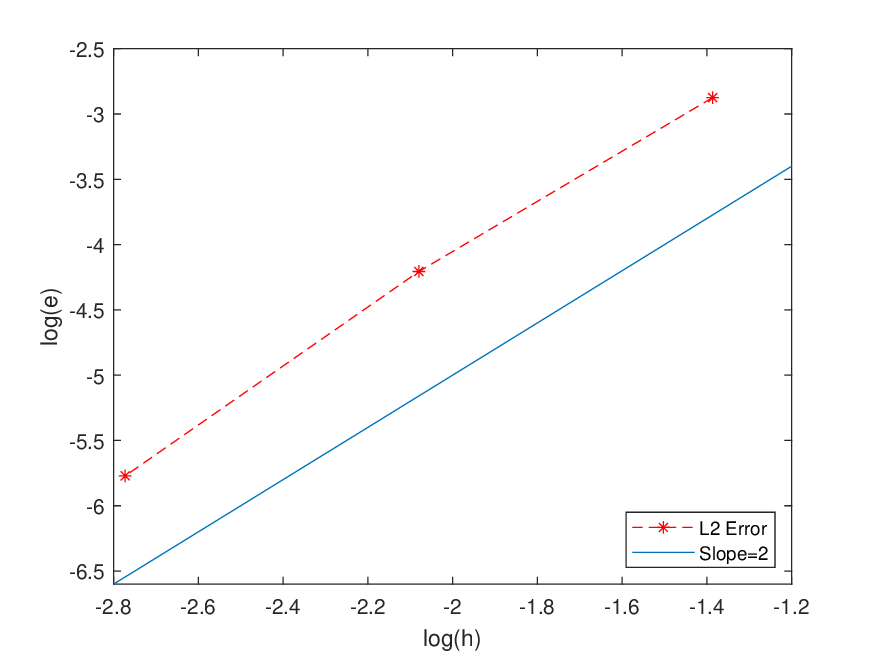}
    \caption{$\log-\log$ error.}
    \label{f3.3.4}
    }
\end{figure}

\begin{figure}[H]
\parbox{6cm}{
    \centering
    \begin{tabular}{ccc}\hline
                      $\kappa$  & error &  order \\
                    \hline
                    $2^{-10}$&       1.1520e-04  & -  \\
					$2^{-9}$&      2.6185e-04  &  1.1846  \\
					$2^{-8}$&     5.5042e-04 &  1.0718   \\					
                    $2^{-7}$&        0.0011 & 1.0461 \\
                     \hline
\end{tabular}
    \captionof{table}{Error and convergence order.}
    \label{tab3.3.7}
    }
    \parbox{6cm}{
    \centering
    \includegraphics[width=5cm]{./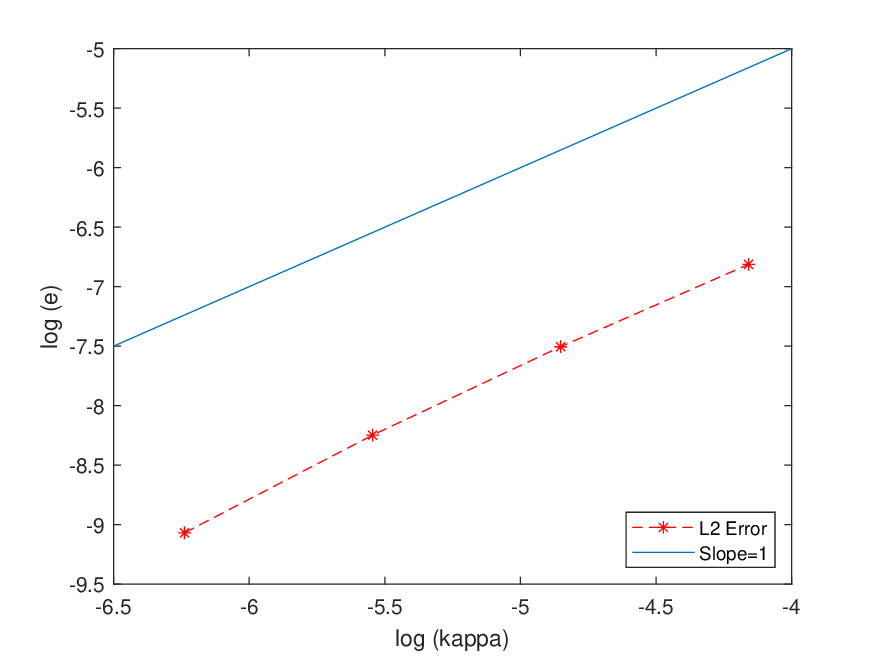}
    \caption{$\log-\log$ error.}
    \label{f3.3.7}
    }
\end{figure}


\begin{figure}[H]
\centering
\begin{tabular}{ccccccc}\hline
                        & $E_{X_1}$& $E_{X_2}$ & $V_{X_1}$& $V_{X_2}$& $V_{X_1X_2}$ \\
                    \hline
                   PDF method&0.0956& 0.0956 &0.5447& 0.5447&0.4047\\
                      Sample trajectories method&0.0945& 0.0957& 0.5499&  0.5470& 0.4079\\
                    Particle method&0.0937 &0.0925&0.5458& 0.5375 &0.4032  \\
                     \hline
\end{tabular}
\captionof{table}{Expectation and covariance.}
\label{tab3.3.4}
\end{figure}

\begin{example}
Consider a $2$-dimensional mean-field SDE driven by additive noise
\begin{equation}\label{e3.5.4}
\begin{aligned}
dX(t)=&0.1\begin{pmatrix}
-\frac{3}{2}X_1(t)+\frac{1}{2}X_2(t)+\sin(2\pi t)\\
\frac{1}{3}X_1(t)-\frac{4}{3}X_2(t)+\cos(2\pi t)
\end{pmatrix}dt\\
&+
0.1\begin{pmatrix}
\int_{\mathbb{R}^2}\frac{\sin y_1}{1+y_1^2}\frac{\sin y_2}{1+y_2^2}\mu_t(dy)\\
\int_{\mathbb{R}^2}\frac{\sin y_1}{1+y_1^2}\frac{\sin y_2}{1+y_2^2}\mu_t(dy)
\end{pmatrix}dt
+\begin{pmatrix}
\frac{1}{10}\\
\frac{1}{10}
\end{pmatrix}dW(t),\\
X(0)=&(X_0^1,X_0^2)\overset{iid}{\sim}N(0,0.01),
\end{aligned}
\end{equation}
where $W(t)$ is a $1$-dimensional Brownian motion.
\end{example}
The corresponding Fokker-Planck equation reads
\begin{equation}\label{e3.5.3}
\begin{aligned}
\frac{\partial p}{\partial t}=&
-0.1\frac{\partial [(-\frac{3}{2}x_1(t)+\frac{1}{2}x_2(t)+\sin(2\pi t)+\int_{\mathbb{R}^2}\frac{\sin y_1}{1+y_1^2}\frac{\sin y_2}{1+y_2^2}p(t,y_1,y_2)dy_1dy_2)p]}{\partial x_1}\\
&-0.1\frac{\partial[(\frac{1}{3}x_1(t)-\frac{4}{3}x_2(t)+\cos(2\pi t)+\int_{\mathbb{R}^2}\frac{\sin y_1}{1+y_1^2}\frac{\sin y_1}{1+y_1^2}p(t,y_1,y_2)dy_1dy_2)p]
}{\partial x_2}\\
&+\frac{1}{200}\frac{\partial^2p}{\partial x_1^2}+\frac{1}{100}\frac{\partial^2p}{\partial x_1\partial x_2}+\frac{1}{200}\frac{\partial^2p}{\partial x_2^2}
\end{aligned}
\end{equation}
and $p_0(x_1,x_2)=\frac{1}{0.02\pi}e^{-\frac12(\frac{x_1^2}{0.01}+\frac{x_2^2}{0.01})}$.

Notice that $\sigma=(\frac{1}{10},\frac{1}{10})^T$ and the rank of  matrix $A=\begin{pmatrix}
  \frac{1}{100}&\frac{1}{100} \\
  \frac{1}{100}&\frac{1}{100}
\end{pmatrix}$ is of  $1$. Then the assumption (A4) is violated and the Fokker-Planck equation \eqref{e3.5.3} becomes degenerate. In this situation, we can not ensure the well-posedness of \eqref{e3.5.3}. However, our method can be used successfully to approximate \eqref{e3.5.4} and \eqref{e3.5.3}. It is point out in  \cite{JinYonVitXue2025} that the spatial convergence order is of $1$. We choose ${\mathcal D}= [-1,1]^2$ to check the corresponding temporal and spatial error estimates and convergence orders, see Tables.\ \ref{tab3.4.1}-\ref{tab3.4.2} and Figs.\ \ref{f3.5.1}-\ref{f3.5.2}, respectively. Then, we enlarge ${\mathcal D}=(-4,4)^2$ to verify errors and convergence orders of Euler-Maruyama method \eqref{e3.4.4} and expectation and covariance, cf.\ Table.\ \ref{tab3.5.1}.

\begin{table*}[h]
   \centering
				\begin{tabular}{ccc} \hline
				$\kappa$  & error &  order \\
                    \hline
                    $2^{-6}$&     0.0178  & -  \\
					$2^{-5}$&       0.0426  &  1.2552 \\
					$2^{-4}$&      0.0946 &  1.1528  \\					
                    $2^{-3}$&      0.2023 &  1.0960   \\
                     \hline
				\end{tabular}
			\caption{Error and  convergence order.}
				  \label{tab3.4.1}
			\centering
				\begin{tabular}{ccc}\hline
					  $h$  & error &  order \\
                   \hline
                     $\frac{2}{48}$&0.0867& -  \\
                     $\frac{2}{24}$&0.1973&1.1854 \\
                     $\frac{2}{6}$&0.4114&1.0604 \\
                     \hline
				\end{tabular}
			\caption{Error and convergence order.}
				 \label{tab3.4.2}		
\end{table*}

\begin{figure}[H]
\parbox{6cm}{
    \centering
    \includegraphics[width=5cm]{./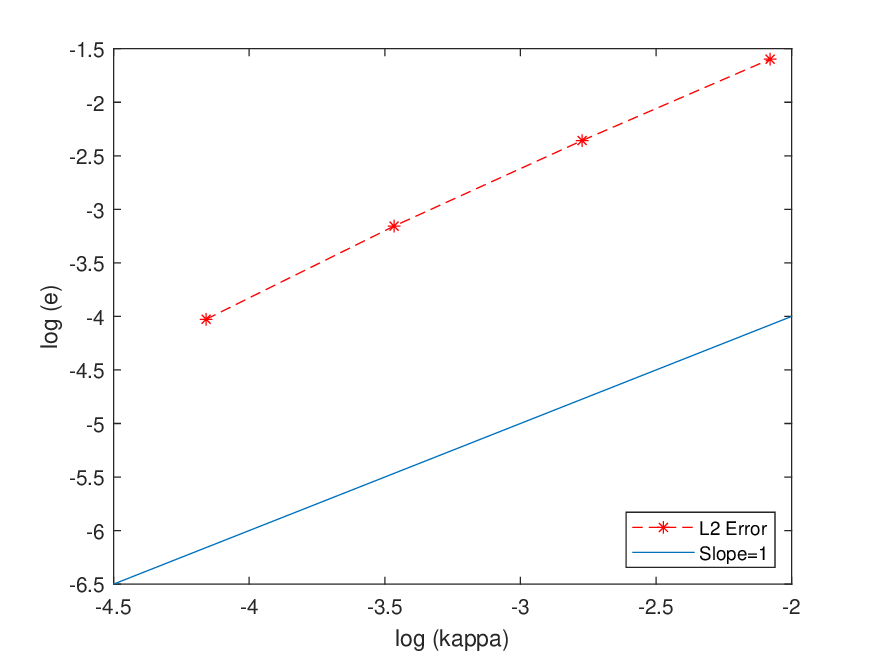}
    \caption{$\log-\log$ error.}
    \label{f3.5.1}
    }
\parbox{6cm}{
    \centering
    \includegraphics[width=5cm]{./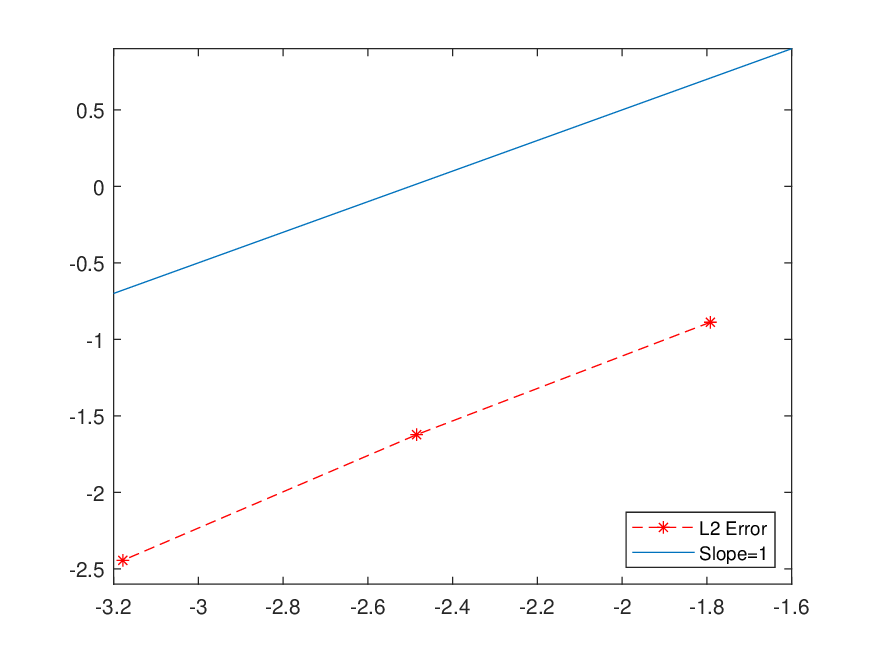}
    \caption{$\log-\log$ error.}
     \label{f3.5.2}
    }
\end{figure}

\begin{figure}[H]
\parbox{6cm}{
    \centering
    \begin{tabular}{ccc}\hline
                      $\kappa$  & error &  order \\
                    \hline
                    $2^{-6}$&       1.2140e-04  & -  \\
					$2^{-5}$&       2.7785e-04 &   1.1984    \\
					$2^{-4}$&     5.9639e-04 &  1.0953  \\					
                    $2^{-3}$&        0.0013 & 1.0785  \\
                     \hline
\end{tabular}
    \captionof{table}{Error and convergence order.}
    }
    \parbox{6cm}{
    \centering
    \includegraphics[width=5cm]{./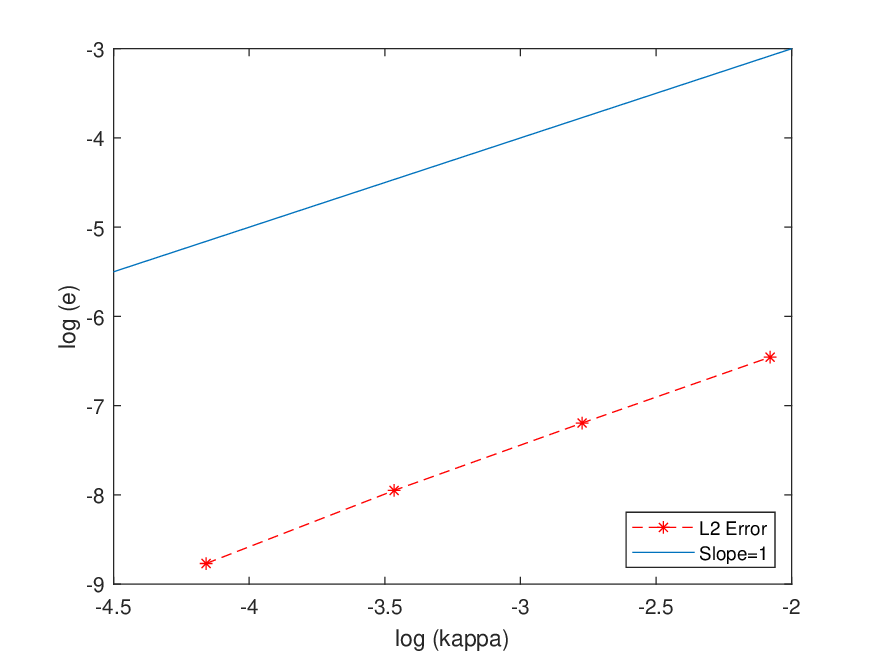}
    \caption{$\log-\log$ error.}
    \label{f3.5.3}
    }
\end{figure}

\begin{figure}[H]
\centering
\begin{tabular}{ccccccc}\hline
                        & $E_{X_1}$& $E_{X_2}$ & $V_{X_1}$& $V_{X_2}$& $V_{X_1X_2}$ \\
                    \hline
                   PDF method& 0& 0  &0.0165 &0.0167  &0.0097 \\
               Sample trajectories method& -1.1977e-04&6.0295e-04 &0.0161 &0.0165 &0.0097 \\
                    Particles method& -9.0079e-04 & 4.9980e-04&0.0169 &0.0165  &  0.0098 \\
                     \hline
\end{tabular}
\captionof{table}{Expectation and covariance.}
\label{tab3.5.1}
\end{figure}

\section*{Acknowledgments}
This research is supported by Jilin Provincial Department of Science and Technology
(20240301017GX) and National Natural Science Foundation of China (12171199).

\bibliographystyle{AIMS}
\bibliography{references}
\end{document}